\def\tr{\mathop{\hbox{tr}}}
\def\id{\mathop{\hbox{id}}}
\def\Span{\mathop{\hbox{span}}}
\def\Pol{\mathrm{Pol}}
\def\fB{\mathcal{B}}
\def\fH{\mathcal{H}}
\def\fK{\mathcal{K}}
\def\fP{\mathcal{P}}
\def\C{\mathbb{C}}
\def\R{\mathbb{R}}
\def\H{\mathbb{H}}
\def\G{\mathbb{G}}
\def\K{\mathbb{K}}
\def\acts{\curvearrowright}
\DeclareRobustCommand\widecheck[1]{{\mathpalette\@widecheck{#1}}}
\def\@widecheck#1#2{%
    \setbox\z@\hbox{\m@th$#1#2$}%
    \setbox\tw@\hbox{\m@th$#1%
       \widehat{%
          \vrule\@width\z@\@height\ht\z@
          \vrule\@height\z@\@width\wd\z@}$}%
    \dp\tw@-\ht\z@
    \@tempdima\ht\z@ \advance\@tempdima2\ht\tw@ \divide\@tempdima\thr@@
    \setbox\tw@\hbox{%
       \raise\@tempdima\hbox{\scalebox{1}[-1]{\lower\@tempdima\box
\tw@}}}%
    {\ooalign{\box\tw@ \cr \box\z@}}}
\theoremstyle{plain}
\newtheorem{thm}{Theorem}[section]
\newtheorem{lem}[thm]{Lemma}
\newtheorem{prop}[thm]{Proposition}
\newtheorem{cor}[thm]{Corollary}
\newtheorem{ques}[thm]{Question}
\theoremstyle{definition}
\newtheorem{defn}[thm]{Definition}
\newtheorem{rem}[thm]{Remark}
\newtheorem{eg}[thm]{Example}
\newcommand*{\toccontents}{\@starttoc{toc}}
\titlespacing*{\chapter}{0pt}{3.5ex plus 1ex minus .2ex}{2.3ex plus .2ex}
\pgfplotsset{compat=1.17}
\begin{document}
\title{Tracial States and $\G$-Invariant States of Discrete Quantum Groups}

\author{Benjamin Anderson-Sackaney}
\maketitle
\begin{abstract}
    We investigate the tracial states and $\mathbb{G}$-invariant states on the reduced $C^*$-algebra $C_r(\widehat{\mathbb{G}})$ of a discrete quantum group $\mathbb{G}$. Here, we denote its dual compact quantum group by $\widehat{\mathbb{G}}$. Our main result is that a state on $C_r(\widehat{\G})$ is tracial if and only if it is $\G$-invariant. This generalizes a known fact for unimodular discrete quantum groups and builds upon the work of Kalantar, Kasprzak, Skalski, and Vergnioux. As one consequence of this, we find that $C_r(\widehat{\mathbb{G}})$ is nuclear and admits a tracial state if and only if $\mathbb{G}$ is amenable. This resolves an open problem due to C.-K. Ng and Viselter, and Crann, in the discrete case. As another consequence, we prove that tracial states on $C_r(\widehat{\G})$ ``concentrate'' on $\widehat{\G}_F$, where $\G_F$ is the cokernel of the Furstenberg boundary. Furthermore, given certain assumptions, we characterize the existence of traces on $C_r(\widehat{\mathbb{G}})$ in terms of whether or not $\widehat{\mathbb{G}}_F$ is Kac type. We also characterize the uniqueness of (idempotent) traces in terms of whether not $\widehat{\G}_F$ is equal to the canonical Kac quotient of $\widehat{\G}$. These results rely on  the following, of which we give proofs: So\l tan's canonical Kac quotient construction, whether it is applied to the universal or the reduced CQG $C^*$-algebra of $\widehat{\G}$ (when the latter admits a trace), yields the maximal Kac type closed quantum subgroup of $\widehat{\G}$.
\end{abstract}

\begin{center}
\bfseries{Contents}
\end{center}
\toccontents

\section{Introduction}
In \cite{BKKO17} and \cite{KK14}, group dynamical characterizations of the unique trace property and simplicity of reduced group $C^*$-algebras were achieved, where they showed that the reduced $C^*$-algebra $C_r(\widehat{G})$ of a discrete group $G$ has a unique tracial state if and only if  the action of $G$ on its Furstenburg boundary $\partial_F(G)$ is faithful. In turn, we also find that simplicity of $C_r(\widehat{G})$ implies it has a unique trace (the Haar state). Given that $R_a(G) = \ker(G \acts \partial_F(G))$, where $R_a(G)$ is the amenable radical, this shows that the unique trace property is equivalent to having $R_a(G) = \{e\}$. A more general result, which illuminates the above characterization of the unique trace property, was also obtained in \cite{BKKO17}, namely that every tracial state $\tau : C_r(\widehat{G})\to\C$ ``concentrates'' on $R_a(G)$, i.e., satisfies $\tau(\lambda_G(s)) = 0$ whenever $s\notin R_a(G)$. Essential to the proof is the well-known fact that a state on $C_r(\widehat{G})$ is tracial if and only if it is $G$-invariant, i.e., is invariant with respect to the conjugation action $G\acts C_r(\widehat{G})$.

Serving as a stepping stone towards establishing quantum group dynamic machinery for quantizing the unique trace property and simplicity of $C_r(\widehat{\G})$ where $\G$ is a discrete quantum group, Kalantar et al. \cite{KKSV22} constructed the Furstenberg boundary $\partial_F(\G)$ (see Section $4.2$ therein). Moreover, the cokernel of $\G\acts \partial_F(\G)$ was shown in \cite{KKSV22} to be equal to $\ell^\infty(\G_F)$ where $\widehat{\G}_F$ is a quantum subgroup of $\widehat{\G}$ that is minimal as an object where $\ell^\infty(\G_F)$ is relatively amenable (see Section $5.1$ therein). In particular, we might call $\G_F$ the `relatively amenable coradical' of $\G$. There is a catch, though. At the quantum level, the canonical trace is replaced with the Haar state $h_{\widehat{\G}}\in C_r(\widehat{\G})^*$, however, $h_{\widehat{\G}}$ may not be tracial and there are known examples where $C_r(\widehat{\G})$ has no trace (e.g., see \cite{B97}). It turns out the Haar state is tracial if and only if $\G$ is unimodular. Therefore, in \cite{KKSV22}, the unique trace property was considered for unimodular discrete quantum groups, where it was shown that if the action of $\G$ on $\partial_F(\G)$ is faithful then $\G$ has the unique trace property. Both the converse and that simplicity of $C_r(\widehat{\G})$ implies the unique trace property were proved recently in \cite{ASK23}.

With \cite[Lemma 5.2]{KKSV22}, it was shown that the $\G$-invariant tracial states are the KMS states of the scaling automorphism group of $\widehat{\G}$ (see \cite{KKSV22}). As a consequence, we have that a state on $C_r(\widehat{\G})$ is $\G$-invariant if and only if it is tracial in the case where $\G$ is unimodular. The latter result was observed by Izumi \cite{I02} in a slightly different form. They also prove that faithfulness of $\G\acts\partial_F(\G)$ implies $C_r(\widehat{\G})$ has no $\G$-invariant states whenever $\G$ is non-unimodular.

In this paper, we show that the above characterization of tracial states from the unimodular case holds in the general case as well, despite the fact that the scaling automorphism group may not be trivial.\\
\\
{\bf Theorem \ref{traceiffGinv}} {\it Let $\G$ be a DQG. A state $\tau\in C_r(\widehat{\G})^*$ is tracial if and only if it is $\G$-invariant.}\\
\\
In particular, combined with \cite[Lemma 5.2]{KKSV22}, Theorem \ref{traceiffGinv} establishes a bijection between the following sets of states on $C_r(\widehat{\G})$:
\begin{align*}
    \text{tracial states}~ \iff ~\text{$\G$-invariant states} ~\iff ~\text{$\tau$-KMS states}
\end{align*}
where $(\tau_t)_{t\in \R}$ is the scaling autmorphism group of $\widehat{\G}$.

As an easy consequence, using the work of Kalantar et al. discussed above, we have that faithfulness of $\G\acts\partial_F(\G)$ implies $C_r(\widehat{\G})$ has no tracial states whenever $\G$ is non-unimodular.

It was proved in \cite{L73} that $C_r(\widehat{G})$ is nuclear if and only if $G$ is amenable. This result was generalized to locally compact groups in \cite{N15} where it was shown that a locally compact group $G$ is amenable if and only if $C_r(\widehat{G})$ is nuclear and admits a tracial state. It seemed that what was essential to the classical proofs was the $G$-invariance of the tracial states. Indeed, a consequence of the main results in \cite{NV17} and Tomatsu's result that amenability of $\G$ is equivlalent to coamenability of $\widehat{\G}$ \cite{Toma}, was that $C_r(\widehat{\G})$ is nuclear and admits a $\G$-invariant state if and only if $\widehat{\G}$ is coamenable, where $\G$ is a discrete quantum group. This is both a quantum group dynamical and operator algebraic characterization of amenability. This statement was then generalized to locally compact quantum groups in \cite{C19}. With Theorem \ref{traceiffGinv}, we demonstrate that we can replace $\G$-invariance with traciality in the quantum version of the above results.\\
\\
{\bf Corollary \ref{nuclear}}. {\it Let $\G$ be a DQG. We have that $C_r(\widehat{\G})$ is nuclear and has a tracial state if and only if $\G$ is amenable.}\\
\\
In particular, Corollary \ref{nuclear} establishes a purely operator algebraic characterization of amenability of $\G$ (in terms of nuclearity).

A new proof of the fact that if $C_r(\widehat{G})$ is nuclear and admits a tracial state then $G$ is amenable for a locally compact group $G$ was found in \cite{FSW22}. Inspired by their techniques, we provide a second proof of Corollary \ref{nuclear} that avoids Theorem \ref{traceiffGinv}. With Proposition \ref{tracepropertiesDQGs}, we prove that $C_r(\widehat{\G})$ admits a tracial state if and only if $\widehat{\G}/\widehat{\G}_{Kac}$ is coamenable. This result, in combination with permanence properties of both nuclearity and coamenability and a reduction to the case of unimodular discrete quantum groups, allows us to obtain a proof of Corollary \ref{nuclear}.

It is straightforward to prove that if $C_r(\widehat{\G})$ is simple then it has no tracial states whenever $\G$ is non-unimodular. As a consequence of Theorem \ref{traceiffGinv}, we also obtain that simplicity of $C_r(\widehat{\G})$ implies the non-existence of $\G$-invariant states for non-unimodular $\G$ as well.

So, we have established that the $\G$-invariant states are exactly the tracial states on the reduced $C^*$-algebras of arbitrary DQGs. It is this and our next result that comprises the rest of the key results of this paper.\\
\\
{\bf Theorem \ref{concentratesonrad}.} {\it Let $\G$ be a DQG. Every tracial state $\tau\in C_r(\widehat{\G})^*$ concentrates on $\widehat{\G}/\widehat{\G}_F$.}\\
\\
We must note that most of the above theorem was proven in the proof of \cite[Theorem 5.3]{KKSV22} (when we replace traciality with $\G$-invariance). Our main contribution for this result is Theorem \ref{traceiffGinv}, which allows it to be stated for tracial states.

Besides this, we analyze how the existence and uniqueness of traces relates to the canonical Kac quotient $\widehat{\G}_{Kac}$ in Proposition \ref{tracepropertiesDQGs}. An essential piece of our work is the following.\\
\\
{\bf Proposition \ref{Canonical Kac Quotient is Maximal} and \ref{Coamenable Kac Quotient}}
\begin{enumerate}
    \item We have that $\widehat{\G}_{MaxKac} = \widehat{\G}_{Kac}$.

    \item Suppose $C_r(\widehat{\G})$ admits a tracial state. Then $\widehat{\G}_{rKac} = \widehat{\G}_{Kac}$ and $\widehat{\G} / \widehat{\G}_{rKac}$ is coamenable.
\end{enumerate}
Above, $\widehat{\G}_{MaxKac}$ is the maximal Kac type closed quantum subgroup of $\widehat{\G}$ (\cite[Theorem 5.9]{Ch15}, \cite{T07, FFS23}), $\widehat{\G}_{Kac}$ is the CQG underlying So\l tan's canonical Kac quotient of $C_u(\widehat{\G})$, and $\widehat{\G}_{rKac}$ is the CQG underlying So\l tan's canonical Kac quotient of $C_r(\widehat{\G})$. It appears to be folklore among experts that $\widehat{\G}_{MaxKac} = \widehat{\G}_{Kac}$ (see \cite[Section 2.3]{NY16}), however, we are not aware of a proof in the literature, so we give one in this paper. Our claim regarding $\widehat{\G}_{rKac}$ does appear to be new.

By combining the above results with our work in Proposition \ref{tracepropertiesDQGs}, we obtain the following.\\
\\
{\bf Corollary \ref{existenceanduniqueoftraces}}. {\it Let $\G$ be a DQG. Suppose that $\widehat{\G}/\widehat{\G}_F$ is coamenable. The following hold:
    \begin{enumerate}
        \item $\widehat{\G}_F$ is Kac type if and only if $C_r(\widehat{\G})$ has a tracial state;
        
        \item  $C_r(\widehat{\G})$ has a tracial state, $\widehat{\G}_F = \widehat{\G}_{Kac}$, and $C_{rKac}(\widehat{\G}_{Kac}) = C_r(\widehat{\G}_F)$ if and only if $C_r(\widehat{\G})$ has a unique tracial state;
        
        \item $C_r(\widehat{\G})$ has a tracial state and $\widehat{\G}_F = \widehat{\G}_{Kac}$ if and only if $C_r(\widehat{\G})$ has a unique idempotent tracial state.
    \end{enumerate}}
See Section $2.4$ for coamenability of quotients. We note that $\widehat{\G}_{Kac}$ is the canonical Kac quotient of $\G$, as defined in \cite{S06} (see Section $3.1$). We also note that it is currently an open question to determine whether or not $\widehat{\G}/\widehat{\G}_F$ is coamenable in general (see \cite[Question 8.1]{KKSV22}). Significant progress has been made recently, however. In the paper \cite{ASK23} a positive answer was established in the cases where $\G$ is unimodular and where $\G$ is exact.

So, it is the Haar state coming from the cokernel of $\G\acts \partial_F(\G)$ that governs the existence and uniqueness of traces of discrete quantum groups (at least when it is itself reduced, i.e., factors through the reduced $C^*$-algebra).

We discuss now the organization of this paper. Section $2$ is reserved for preliminary concepts. We discuss locally compact quantum groups, $\G$-boundaries, closed quantum subgroups, idempotent states, and (co)amenability. We reserve Section $3$ for our main theorems. We recall the construction of the Furstenberg boundary, the Kac and unimodularity properties, the construction of the canonical Kac quotient, and the basics of $\G$-invariant states. We spend the remainder of the paper proving our main theorems highlighted above.

\section{Basics of Quantum Groups}
\subsection{Discrete Quantum Groups}
The notion of a quantum group we will be using is the von Neumann algebraic one developed by Kustermans and Vaes \cite{KV00}. A {\bf locally compact quantum group} (LCQG) $\G$ is a quadruple $(L^\infty(\G), \Delta_\G, h_L, h_R)$ where $L^\infty(\G)$ is a von Neumann algebra; $\Delta_\G : L^\infty(\G) \to L^\infty(\G)\overline{\otimes}L^\infty(\G)$ a normal unital $*$--homomorphism satisfying $(\Delta_\G\otimes\id)\circ\Delta_\G = (\id\otimes\Delta_\G)\circ\Delta_\G$ (coassociativity); and $h_L$ and $h_R$ are normal semifinite faithful weights on $L^\infty(\G)$ satisfying
$$h_L(f\otimes \id)\Delta_\G(x) =f(1)h_L(x), ~ 0\leq f\in L^1(\G), 0\leq x\in \mathcal{M}_{h_L} ~ \text{(left invariance)}$$
and
$$h_R(\id\otimes f)\Delta_\G(x) = f(1)h_R(x), ~ 0\leq f\in L^1(\G), 0\leq x\in \mathcal{M}_{h_R} ~ \text{(right invariance)},$$
where $\mathcal{M}_{h_L}$ and $\mathcal{M}_{h_R}$ are the set of integrable elements of $L^\infty(\G)$ with respect to $h_L$ and $h_R$ respectively. We call $\Delta_\G$ the {\bf coproduct} and $h_L$ and $h_R$ the {\bf left and right Haar weights} respectively, of $\G$. The predual $L^1(\G) := L^\infty(\G)_*$ is a Banach algebra with respect to the product $f*g:= (f\otimes g)\circ\Delta_\G$ known as {\bf convolution}.

Using $h_L$, we can build a GNS Hilbert space $L^2(\G)$ in which $L^\infty(\G)$ is standardly represented. There is a unitary $W_\G\in L^\infty(\G)\overline{\otimes}\fB(L^2(\G))$ such that $\Delta_\G(x) = W_\G^*(1\otimes x)W_\G$. The unitary $W_\G$ is known as the {\bf left fundamental unitary} of $\G$. The {\bf left regular representation} is the representation
$$\lambda_\G : L^1(\G)\to \fB(L^2(\G)), ~ f\mapsto (f\otimes\id)(W_\G).$$
There is a dense involutive algebra $L^1_\#(\G)\subseteq L^1(\G)$ that makes $\lambda_\G|_{L^{\#}(\G)}$ a $*$-representation. The {\bf unitary antipode} is the $*$-antiautomorphism
$$R_\G : L^\infty(\G)\to L^\infty(\G), ~ x\mapsto J_{\widehat{\G}}x^*J_{\widehat{\G}},$$
where $J_{\G} : L^2(\G)\to L^2(\G)$ is the modular conjugation in the standard form of $L^\infty(\G)$. The {\bf unitary antipode} satisfies the formula $(R_\G\otimes R_\G)\circ\Delta_\G = \sigma\circ\Delta_\G\circ R_\G$, where $\sigma$ is the flip, which satisfies $\sigma(a\otimes b) = b\otimes a$. The {\bf antipode} is the antimultiplicative linear map $S_\G(x) = \tau_{-i/2}\circ R_\G(x)$, $x\in D(S_\G)$, where $(\tau_t)_{t\in \R}$ is the scaling group of $\G$. Then, $L^1_\#(\G) = \{f\in L^1(\G) : \overline{f}\circ S_\G\in L^1(\G)\}$
and the involution on $L^1_\#(\G)$ is $f\mapsto f^\#$ where $f^\#(x) = \overline{f(S_\G(x)^*)}$.

We denote the von Neumann algebra $L^\infty(\widehat{\G}) = \lambda_\G(L^1(\G))''$. There exists a LCQG $\widehat{\G} = (L^\infty(\widehat{\G}),\Delta_{\widehat{\G}}, \widehat{h_L}, \widehat{h_R})$, where $\Delta_{\widehat{\G}}$ is implemented by $W_{\widehat{\G}} = \Sigma W_\G^* \Sigma$, where $\Sigma : a\otimes b\mapsto b\otimes a$ is the flip map. Pontryagin duality holds: $\hat{\hat{\G}} = \G$.

A {\bf discrete quantum group (DQG)} is a LCQG $\G$ where $L^1(\G) ~(= \ell^1(\G))$ is unital (cf. \cite{R08}, \cite{Wor}). Note also that we write $\ell^\infty(\G) = L^\infty(\G)$. We denote the unit by $\epsilon_\G$. Equivalently, $\widehat{\G}$ is a {\bf compact quantum group (CQG)}, which is a LCQG where $h_{\widehat{\G}} = \widehat{h_L} = \widehat{h_R}\in L^1(\widehat{\G})$ is a state, known as the Haar state of $\widehat{\G}$. When $\G$ is discrete, the irreducible $*$-representations of $L^1(\widehat{\G})$ are finite dimensional, where a $*$-representation of a locally compact $\G$ is a non-degenerate cb-representation that restricts to a $*$-representation on $L^1_\#(\G)$. Given a cb-representation $\pi : L^1(\widehat{\G})\to \fB(\fH_\pi)\cong M_{n_\pi}$, there exists an operator $U^\pi\in L^\infty(\widehat{\G})\overline{\otimes}\fB(\fH_\pi)$ such that
$$\pi(u) = (u\otimes \id)(U^\pi), ~ u\in L^1(\widehat{\G}).$$
Equivalently, a cb-representation $L^1(\widehat{\G})\to \fB(\fH_\pi)$ is a $*$-representation if $U^\pi$ is unitary. Representations $\pi$ and $\rho$ are unitarily equivalent if there exists a unitary $U\in M_{n_\pi}$ such that $(1\otimes U^*)U^\pi(1\otimes U) = U^\rho$. We let $Irr(\widehat{\G})$ denote the collection of equivalence classes of irreducibles. Note that we will abuse notation and simply write $\pi\in Irr(\widehat{\G})$ when we are choosing a representative from $[\pi]$. For each $\pi\in Irr(\widehat{\G})$ we let $\fH_\pi$ denote the corresponding $n_\pi$-dimensional Hilbert space. In the instance where $\pi\in Irr(\widehat{\G})$, we write $U^\pi = [u_{i,j}^\pi]_{i,j}$, so $\pi(u) = [u(u_{i,j}^\pi)]_{i,j}$ for $u\in L^1(\widehat{\G})$ and some orthonormal basis (ONB) $\{e_i^\pi\}$ of $\fH_\pi$. We let $\overline{\pi} : L^1(\widehat{\G})\to \fB(\fH_{\overline{\pi}})$ denote the representation where $U^{\overline{\pi}} = [(u_{i,j}^\pi)^*] = \overline{U^\pi}$.

It turns out that every $*$-representation decomposes into a direct sum of irreducibles and the left regular representation decomposes into a direct sum of elements from $Irr(\widehat{\G})$, each with multiplicity $n_\pi$. Consequently, we have $W_{\widehat{\G}} = \bigoplus_{\pi\in Irr(\widehat{\G})} U^\pi\otimes I_{n_{\overline{\pi}}}$ using the identification $L^2(\widehat{\G}) = \oplus_{\pi\in Irr(\widehat{\G})}\fH_\pi\otimes \fH_{\overline{\pi}}$.

We denote the $*$-algebra
$$\Pol(\widehat{\G}) = \Span\{ u^\pi_{i,j} : 1\leq i,j\leq n_\pi, \pi\in Irr(\widehat{\G}))\}\subseteq L^\infty(\widehat{\G}).$$
It follows that $L^\infty(\widehat{\G}) = \overline{\Pol(\widehat{\G})}^{wk*}$ from Pontryagin duality.

Focusing on $\G$, the above entails that we have $\ell^\infty(\G) = \bigoplus^{\ell^\infty}_{\pi\in Irr(\widehat{\G})} M_{n_\pi}$ as von Neumann algebras, which implies $\ell^1(\G) = \bigoplus^{\ell^1}_{\pi\in Irr(\widehat{\G})}(M_{n_\pi})_*$ spatially. We denote the $C^*$-algebra $c_0(\G) = \bigoplus_{\pi\in Irr(\widehat{\G})}^{c_0}M_{n_\pi}$, which we note satisfies $M(c_0(\G)) = \ell^\infty(\G)$.

For this discussion, we are required to discuss the $C^*$-algebraic formulation of quantum groups in the universal setting (cf. \cite{K01}). There exists a universal $C^*$-norm $||\cdot||_u$ on $\Pol(\widehat{\G})$. Let $||\cdot||_r$ be the norm on $\fB(L^2(\widehat{\G}))$. We define the unital $C^*$-algebras $C_u(\widehat{\G}) = \overline{\Pol(\G)}^{||\cdot||_u}$ and $C_r(\widehat{\G}) = \overline{\Pol(\widehat{\G})}^{||\cdot||_r} \subseteq L^\infty(\widehat{\G})$. The universal property gives us a $C^*$-algebraic coproduct on $C_u(\widehat{\G})$: a unital $*$-homomorphism
$$\Delta^u_\G : C_u(\widehat{\G})\to C_u(\widehat{\G})\otimes_{min}C_u(\widehat{\G})$$
satisfying coassociativity. Likewise, $\Delta^r_{\widehat{\G}} = \Delta_{\widehat{\G}}|_{C_r(\widehat{\G})}$ gives us a $C^*$-algebraic coproduct on $C_r(\widehat{\G})$. The spaces $C_u(\widehat{\G})^*$ and $C_r(\widehat{\G})^*$, and are known as the {\bf universal and reduced measure algebras} of $\widehat{\G}$ respectively. Similar to the von Neumann algebraic case, the coproduct on $C(\widehat{\G})$ induces a product on $C(\widehat{\G})^*$:
$$\mu*\nu(a) = (\mu\otimes\nu)\Delta(a), ~ a\in C_u(\widehat{\G}), \mu,\nu \in C(\widehat{\G})^*,$$
making $C(\widehat{\G})^*$ a Banach algebra, where, above, $C(\widehat{\G})$ and $C(\widehat{\G})^*$ can be either the universal or reduced versions.

The universal property gives us a unital surjective $*$-homomorphism
$$\Gamma_{\widehat{\G}} : C_u(\widehat{\G})\to C_r(\widehat{\G})$$
satisfying $\Delta_{\widehat{\G}}^r\circ \Gamma_{\widehat{\G}} = (\Gamma_{\widehat{\G}}\otimes\Gamma_{\widehat{\G}})\circ\Delta^u_{\widehat{\G}}$. The adjoint of this map induces a completely isometric algebra homomorphism $C_r(\widehat{\G})^*\to C_u(\widehat{\G})^*$ such that $C_r(\widehat{\G})^*$ is realized as a weak$^*$ closed ideal in $C_u(\widehat{\G})^*$. A Hahn-Banach argument shows $\overline{L^1(\widehat{\G})}^{wk*} = C_r(\widehat{\G})^*$, and $L^1(\widehat{\G})$ is a closed ideal in $C_u(\widehat{\G})^*$ as well.

More generally, we say $C_\sigma(\widehat{\G})$ is a {\bf CQG $C^*$-algebra} if it is a $C^*$-algebraic completion of $\Pol(\widehat{\G})$ and admits a coproduct $\Delta_\sigma$ such that $\Delta_\sigma|_{\mathrm{Pol}(\widehat{\G})} = \Delta|_{\mathrm{Pol}(\widehat{\G})}$. There always exists canonical quotient maps $C_u(\widehat{\G})\to C_\sigma(\widehat{\G})\to C_r(\widehat{\G})$ that intertwine the coproducts such as $\Gamma_{\widehat{\G}}$ does for the universal and reduced coproducts.
\begin{defn}
    A CQG $C^*$-algebra $C_\sigma(\widehat{\G})$ is {\bf exotic} if the canonical quotient maps $C_u(\widehat{\G})\to C_\sigma(\widehat{\G})\to C_r(\widehat{\G})$ are non-injective.
\end{defn}
The restriction of the coproduct is a unital $*$-homomorphism $\Delta_{\widehat{\G}} : \Pol(\widehat{\G})\to \Pol(\widehat{\G})\otimes \Pol(\widehat{\G})$ that satisfies $\Delta_{\widehat{\G}}(u_{i,j}^\pi) = \sum_{t=1}^{n_\pi}u_{i,t}^\pi\otimes u_{t,j}^\pi$. The unital $*$-homomorphism $\epsilon_{\widehat{\G}} : \Pol(\widehat{\G})\to\C$, $u_{i,j}^\pi\mapsto \delta_{i,j}$
extends to a unital $*$-homorphism $\epsilon^u_{\widehat{\G}} : C_u(\widehat{\G})\to \C$ which is the identity element in $C_u(\widehat{\G})^*$. Finally, if $u_{i,j}^\pi \neq 1$, then $h_{\widehat{\G}}(u_{i,j}^\pi) = 0$.

\begin{rem}\label{CommutativeCase}
    The examples of DQGs where $\ell^\infty(\G)$ is commutative are the discrete groups (cf. \cite{T69}), where if $G$ is a discrete group, then $G = (\ell^\infty(G), \Delta_G, m_L, m_R)$ where $m_L = m_R = h_L = h_R$ is the counting measure, and $\Delta_G(f)(s,t) = f(st)$.
\end{rem}

\subsection{Closed Quantum Subgroups}
We use \cite{Daws} and \cite{Kal1} as our primary reference for closed quantum subgroups. We use the notion of a quantum subgroup in the sense of Woronowicz. Let $\G$ and $\H$ be DQGs. We will say $\widehat{\H}$ is a closed quantum subgroup of $\widehat{\G}$ if there exists a surjective unital $*$-homomorphism $\pi_{\widehat{\H}} : \Pol(\widehat{\G})\to \Pol(\widehat{\H})$ such that $(\pi_{\widehat{\H}}\otimes\pi_{\widehat{\H}})\circ\Delta_{\widehat{\G}} = \Delta_{\widehat{\H}}\circ\pi_{\widehat{\H}}$. We will write $\widehat{\H}\leq \widehat{\G}$. In the sequel, we will need to pass between the ``algebraic analogue'' of the map $\pi_{\widehat{\H}}$ as it is defined above and the ``operator algebraic analogues'' of such a map. More precisely, we will frequently use the following lemma.
\begin{lem}\label{Passing CQGs}\cite[Lemma 2.8]{T07}
    \begin{enumerate}
        \item Let $C_{\sigma_1}(\widehat{\G})$ and $C_{\sigma_2}(\widehat{\H})$ be (possibly exotic) CQG $C^*$-algebras of $\widehat{\G}$ and $\widehat{\H}$ respectively. Every surjective unital $*$-homomorphism $\pi_{\widehat{\H}}^{\sigma_{1,2}} : C_{\sigma_1}(\widehat{\G})\to C_{\sigma_2}(\widehat{\H})$ such that $(\pi_{\widehat{\H}}^{\sigma_{1,2}}\otimes \pi_{\widehat{\H}}^{\sigma_{1,2}})\circ\Delta^{\sigma_1}_{\widehat{\G}} = \Delta^{\sigma_2}_{\widehat{\H}}\circ\pi_{\widehat{\H}}^{\sigma_{1,2}}$ restricts to a surjective unital $*$-homorphism $\pi_{\widehat{\H}}^{\sigma_{1,2}}|_{\Pol(\widehat{\G})} = \pi_{\widehat{\H}} : \Pol(\widehat{\G})\to \Pol(\widehat{\H})$ such that $(\pi_{\widehat{\H}}\otimes \pi_{\widehat{\H}})\circ\Delta_{\widehat{\G}} = \Delta_{\widehat{\H}}\circ\pi_{\widehat{\H}}$.

        \item Every surjective unital $*$-homomorphism $\pi_{\widehat{\H}} : \Pol(\widehat{\G})\to \Pol(\widehat{\H})$ such that $(\pi_{\widehat{\H}}\otimes\pi_{\widehat{\H}})\circ\Delta_{\widehat{\G}} = \Delta_{\widehat{\H}}\circ\pi_{\widehat{\H}}$ extends to a surjective unital $*$-homomorphism $\pi_{\widehat{\H}}^u : C_u(\widehat{\G})\to C_u(\widehat{\H})$ such that $(\pi_{\widehat{\H}}^u\otimes\pi_{\widehat{\H}}^u)\circ\Delta^u_{\widehat{\G}} = \Delta^u_{\widehat{\H}}\circ\pi^u_{\widehat{\H}}$.
    \end{enumerate}
\end{lem}
The proof of the above lemma follows from the same argument used in the proof of \cite[Lemma 2.8]{T07}. We emphasize that the results as stated above do not require the Haar states to be faithful (see the first line after \cite[Definition 2.1]{T07}) or the CQGs to be coamenable (as stated in part $2$ of \cite[Lemma 2.8]{T07}).

In the setting of CQGs, every closed quantum subgroup in the sense of Woronowicz is a closed quantum subgroup in the sense of Vaes. In particular, if $\widehat{\H}\leq \widehat{\G}$ then there is an injective normal unital $*$-homomorphism $\gamma_{\H} : \ell^\infty(\H)\to \ell^\infty(\G)$ such that
$$(\gamma_{\H}\otimes\gamma_{\H})\circ\Delta_{\H} = \Delta_\G\circ\gamma_{\H}.$$
Conversely, if $\H$ and $\G$ are DQGs such that there is a normal unital $*$-homomorphism $\gamma_{\H} : \ell^\infty(\H)\to \ell^\infty(\G)$ as above, then $\widehat{\H}\leq \widehat{\G}$ as well.
Given $\widehat{\H}\leq\widehat{\G}$, we define the (left) quotient space:
$$\Pol(\widehat{\G}/\widehat{\H}) := \{a\in \Pol(\widehat{\G}) : (\id\otimes\pi_{\widehat{\H}})(\Delta_{\widehat{\G}}(a)) = a\otimes 1\}.$$
Analogously, we can define the right quotient space $\Pol(\widehat{\H}\backslash\widehat{\G})$ in the obvious way. We denote the reduced completions by
$$L^\infty(\widehat{\G}/\widehat{\H}) = \Pol(\widehat{\G}/\widehat{\H})'' ~ \text{and} ~ C_r(\widehat{\G}/\widehat{\H}) = \overline{\Pol(\widehat{\G}/\widehat{\H})}\subseteq C_r(\widehat{\G})$$
and the universal completion by
$$C_u(\widehat{\G}/\widehat{\H}) = \overline{\Pol(\widehat{\G}/\widehat{\H})}\subseteq C_u(\widehat{\G}).$$
It is straightforward to see that $\Gamma_{\widehat{\G}}(C_u(\widehat{\G}/\widehat{\H})) = C_r(\widehat{\G}/\widehat{\H})$ where $\Gamma_{\widehat{\G}} : C_u(\widehat{\G})\to C_r(\widehat{\G})$ is the canonical quotient morphism.
\begin{rem}
    In \cite[p. 801 and Definition 3.8]{KKSV22}, the quotient space $\widehat{\G}/\widehat{\H}$ was defined by setting $C_u(\widehat{\G}/\widehat{\H}) = \{a\in C_u(\widehat{\G}) : (\id\otimes\pi^u_{\widehat{\H}})\circ\Delta^u_{\widehat{\G}}(a) = a\otimes 1\}$. Then, $C_r(\widehat{\G}/\widehat{\H})$ was defined by setting $C_r(\widehat{\G}/\widehat{\H}) := \Gamma_{\widehat{\G}}(C_u(\widehat{\G}/\widehat{\H}))$. It is well-known that this is an equivalent way of defining $\widehat{\G}/\widehat{\H}$. Indeed,
    $$\Pol(\widehat{\G}/\widehat{\H}) = \{a\in \Pol(\widehat{\G}) : (\id\otimes h_{\widehat{\H}}\circ\pi_{\widehat{\H}})(\Delta_{\widehat{\G}}(a)) = a\}.$$
    Then, according to, for example, the discussion in the paragraphs above \cite[Proposition 3.2]{Wang1}, $\Pol(\widehat{\G})\cap C_u(\widehat{\G}/\widehat{\H}) = \Pol(\widehat{\G}/\widehat{\H})$ and $\Pol(\widehat{\G}/\widehat{\H})$ is norm dense in $C_u(\widehat{\G}/\widehat{\H})$. The same argument can be used to prove that our definition of $L^\infty(\widehat{\G}/\widehat{\H})$ coincides with that of \cite{KKSV22}.
\end{rem}

\subsection{Idempotent States}
Let $\G$ be a DQG. We say a state $\omega\in C_u(\widehat{\G})^*$ is {\bf idempotent} if $\omega*\omega = \omega$. Whenever $\widehat{\H}\leq\widehat{\G}$ is a closed quantum subgroup, $h_{\widehat{\H}}\circ\pi_{\widehat{\H}} \in C_u(\widehat{\G})^*$ is an idempotent state. Not every idempotent state has this form, even for groups.
\begin{defn}
    Let $\G$ be a DQG. An idempotent state of the form $\omega_{\widehat{\G}/\widehat{\H}} = h_{\widehat{\H}}\circ\pi_{\widehat{\H}}^u$, where $\widehat{\H}$ is a closed quantum subgroup of $\widehat{\G}$, is known as a {\bf Haar idempotent}.
\end{defn}
\begin{rem}\label{Remark on Idempotent States}
\begin{enumerate}
    \item Let $\G = G$ be a discrete group. We have that $\omega\in C_u(\widehat{G})^*$ is an idempotent state if and only if $\omega = 1_H$ for some subgroup $H\leq G$ where
    $$1_H(\lambda(s)) = \begin{cases}
        1 &\text{if} ~s\in H\\ 0 &\text{otherwise}
    \end{cases}$$
    \cite[Theorem~2.1]{IS05}. It turns out that $\omega$ is Haar type if and only if $H$ is normal (this more or less follows from Theorem \ref{HaarIdempotents}).
    
    Note that in the case of a discrete group $G$ together with a normal subgroup $N$, $C_u(\widehat{G} / \widehat{G/N}) = C_u(\widehat{N})$.

    \item Let $\widehat{\H}$ be a closed quantum subgroup of $\widehat{\G}$ such that there exists a surjective unital $*$-homomorphism $\pi^\sigma_{\widehat{\H}} : C_{\sigma_1}(\widehat{\G})\to C_{\sigma_2}(\widehat{\H})$ such that $(\pi^{\sigma_2}_{\widehat{\H}}\otimes \pi^{\sigma_2}_{\widehat{\H}})\circ \Delta^{\sigma_1}_{\widehat{\G}} = \Delta^{\sigma_2}_{\widehat{\H}}\circ \pi^{\sigma_2}_{\widehat{\H}}$ where $C_{\sigma_1}(\widehat{\G})$ and $C_{\sigma_2}(\widehat{\H})$ are potentially exotic CQG $C^*$-algebras of $\widehat{\G}$ and $\widehat{\H}$ respectively (see Lemma \ref{Passing CQGs}). The adjoint induces completely isometric algebra embeddings $C_{\sigma_2}(\widehat{\H})^*\subseteq C_{\sigma_1}(\widehat{\G})^*\subseteq C_u(\widehat{\G})^*$. By definition, the Haar idempotent induced by $\widehat{\H}$ is simply given by $\omega_{\widehat{\G}/\widehat{\H}} = h_{\widehat{\H}}$ via these embeddings, where $h_{\widehat{\H}}$ is the Haar state on $C_{\sigma_2}(\widehat{\H})$.
\end{enumerate}
\end{rem}
In general, the Haar idempotents can be distinguished with the following important theorem.
\begin{thm}\cite[Theorem 5]{SS16}\label{HaarIdempotents}
    Let $\G$ be a DQG and $\omega\in C_u(\widehat{\G})^*$ an idempotent state. Then $\omega$ is a Haar idempotent if and only if $I_\omega = \{a\in C_u(\widehat{\G}) : \omega(a^*a) = 0\}$ is a two-sided ideal (equivalently, $I_\omega$ is self-adjoint).
\end{thm}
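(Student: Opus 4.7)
The plan is to handle the two directions separately. The forward direction reduces to a direct computation: if $\omega = h_{\widehat{\H}} \circ \pi_{\widehat{\H}}^u$ for some $\widehat{\H} \leq \widehat{\G}$, then $h_{\widehat{\H}}$ viewed as a state on $C_u(\widehat{\H})$ factors through the canonical quotient $\Gamma_{\widehat{\H}} : C_u(\widehat{\H}) \to C_r(\widehat{\H})$, on which it is faithful. Hence its left GNS kernel on $C_u(\widehat{\H})$ equals the two-sided ideal $\ker \Gamma_{\widehat{\H}}$, and $I_\omega = (\pi_{\widehat{\H}}^u)^{-1}(\ker \Gamma_{\widehat{\H}})$, being the preimage of a two-sided ideal under a unital $*$-homomorphism, is itself a two-sided ideal.

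For the reverse direction, assume $I_\omega$ is self-adjoint (equivalently, two-sided). Set $A := C_u(\widehat{\G}) / I_\omega$ with quotient map $q : C_u(\widehat{\G}) \to A$; the state $\omega$ descends to a faithful state $\tilde{\omega}$ on $A$. The strategy is to construct a CQG $\widehat{\H}$ so that $A \cong C_r(\widehat{\H})$ via $q$, and so that the natural lift of $q$ to $C_u(\widehat{\H})$ realizes $\widehat{\H}$ as a closed quantum subgroup of $\widehat{\G}$ with $\omega = h_{\widehat{\H}} \circ \pi_{\widehat{\H}}^u$.

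The technical heart of this direction is the construction of a coproduct $\Delta_A : A \to A \otimes_{\min} A$ satisfying $(q \otimes q) \circ \Delta^u = \Delta_A \circ q$, equivalently, showing $\Delta^u(I_\omega) \subseteq \ker(q \otimes q)$. The idempotency $\omega * \omega = \omega$ combined with the definition of $I_\omega$ yields $(\omega \otimes \omega)(\Delta^u(a)^* \Delta^u(a)) = \omega(a^*a) = 0$ for $a \in I_\omega$, placing $\Delta^u(a)$ in the left GNS kernel of $\omega \otimes \omega$ on $C_u(\widehat{\G}) \otimes_{\min} C_u(\widehat{\G})$. Converting this to $\Delta^u(a) \in \ker(q \otimes q)$ is delicate, since faithfulness of a state does not generally descend to the minimal tensor product; the self-adjointness of $I_\omega$ enters through Schwarz-type slice estimates that promote the vanishing of $\omega \otimes \omega$ on the positive element $\Delta^u(a)^*\Delta^u(a)$ to membership of $\Delta^u(a)$ in the closed two-sided ideal generated by $I_\omega \otimes C_u(\widehat{\G}) + C_u(\widehat{\G}) \otimes I_\omega$.

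Once $\Delta_A$ is in hand, the algebraic core $q(\Pol(\widehat{\G}))$ inherits a Hopf $*$-algebra structure from $\Pol(\widehat{\G})$, with counit and antipode descending by analogous arguments using the self-adjointness of $I_\omega \cap \Pol(\widehat{\G})$, and with $\tilde{\omega}$ restricting to its Haar functional. Woronowicz's reconstruction theorem then produces a CQG $\widehat{\H}$ with $\Pol(\widehat{\H}) = q(\Pol(\widehat{\G}))$ and $C_r(\widehat{\H}) \cong A$, while $q|_{\Pol(\widehat{\G})}$ intertwines the coproducts and so exhibits $\widehat{\H}$ as a closed quantum subgroup of $\widehat{\G}$ in Woronowicz's sense. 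Taking universal completions yields $\pi_{\widehat{\H}}^u$ with $q = \Gamma_{\widehat{\H}} \circ \pi_{\widehat{\H}}^u$, giving $\omega = h_{\widehat{\H}} \circ \pi_{\widehat{\H}}^u$ as required. The main obstacle throughout is the descent step in the third paragraph, which is precisely where self-adjointness of $I_\omega$ (as opposed to merely its being a closed left ideal) is indispensable.
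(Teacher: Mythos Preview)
The paper does not prove this theorem; it is quoted from \cite[Theorem~5]{SS16} and used as a black box throughout, so there is no in-paper argument against which to compare your proposal.

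As for the proposal itself: the forward direction is correct and complete. The reverse direction follows the right strategy (indeed essentially that of \cite{SS16}), but you explicitly leave the decisive step --- passing from $(\omega\otimes\omega)\big(\Delta^u(a)^*\Delta^u(a)\big)=0$ to $(q\otimes q)\Delta^u(a)=0$ --- as ``Schwarz-type slice estimates'', which names what is needed rather than supplies it. One way to close the gap: since $I_\omega$ is two-sided it equals $\ker\pi_\omega$ for the GNS triple $(\pi_\omega,H_\omega,\xi_\omega)$, so identify $q$ with $\pi_\omega$; the cancellation law $\overline{\Delta^u(C_u(\widehat\G))(1\otimes C_u(\widehat\G))}=C_u(\widehat\G)\otimes_{\min}C_u(\widehat\G)$ then reduces the claim to showing $(\omega\otimes\omega)\big((1\otimes c)^*\Delta^u(e^*e)(1\otimes c)\big)=0$ for $e\in I_\omega$ and arbitrary $c$, and this holds because the positive element $f:=(\omega\otimes\id)\Delta^u(e^*e)$ satisfies $\omega(f)=\omega*\omega(e^*e)=0$, hence $f^{1/2}\in I_\omega$, and $\omega(c^*fc)=\omega\big((f^{1/2}c)^*(f^{1/2}c)\big)=0$ precisely because $I_\omega$ is a \emph{right} ideal. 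You also assert $A\cong C_r(\widehat\H)$ with $\tilde\omega$ equal to the Haar state without justification; this amounts to the (true but nontrivial) fact that a faithful idempotent state on a CQG $C^*$-algebra is the Haar state, and should be argued rather than stated.
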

An important consequence of Theorem \ref{HaarIdempotents} for us will be that the tracial idempotent states are automatically Haar idempotents. Note that given a closed quantum subgroup $\widehat{\H}\leq \widehat{\G}$, we have that $C_r(\widehat{\H})= C_u(\widehat{\G})/I_{\omega_{\widehat{\G}/\widehat{\H}}}$ and the induced quotient map $q : C_u(\widehat{\G})\to C_u(\widehat{\G})/I_{\omega_{\widehat{\G}/\widehat{\H}}}$ satisfies $q = \Gamma_{\widehat{\H}}\circ\pi^u_{\widehat{\H}}$ (see the proof of \cite[Theorem 5]{SS16}). 

Before proceeding to the next subsection we will note some important observations for our work (see \cite{SS16}). Consider the map $L^u_{\omega_{\widehat{\G}/\widehat{\H}}} = (\id\otimes\omega_{\widehat{\G}/\widehat{\H}})\circ\Delta^u_{\widehat{\G}}$. It is known that $L^u_{\omega_{\widehat{\G}/\widehat{\H}}}$ is a conditional expectation onto $C_u(\widehat{\G}/\widehat{\H})$ and admits a reduced version
$$L^r_{\omega_{\widehat{\G}/\widehat{\H}}} : C_r(\widehat{\G})\to C_r(\widehat{\G}/\widehat{\H})$$
which satisfies $\Gamma_{\widehat{\G}}\circ L^u_{\omega_{\widehat{\G}/\widehat{\H}}}  = L^r_{\omega_{\widehat{\G}/\widehat{\H}}}\circ\Gamma_{\widehat{\G}}$. There is also a von Neumann algebraic version
$$L_{\omega_{\widehat{\G}/\widehat{\H}}} : L^\infty(\widehat{\G})\to L^\infty(\widehat{\G}/\widehat{\H})$$
such that $L_{\omega_{\widehat{\G}/\widehat{\H}}}|_{C_r(\widehat{\G})} = L^r_{\omega_{\widehat{\G}/\widehat{\H}}}$.

Let $\widehat{\H}\leq\widehat{\G}$. We set $P_\H = \lambda_{\widehat{\G}}^u(\omega_{\widehat{\G}/\widehat{\H}})$, where $\lambda^u_{\widehat{\G}}(\mu) = (\mu\otimes \id)(\mathbb{W}_{\widehat{\G}}) \in \ell^\infty(\G)$ and  $\mathbb{W}_{\widehat{\G}}$ is the half-lifted version of $W_{\widehat{\G}}$. It turns out that $P_\H\in \ell^\infty(\H)$ is the (unique) minimal group-like projection that generates $\ell^\infty(\H)$ (see the proof of \cite[Theorem 3.1]{FK17}), i.e., $P_\H$ is a self-adjoint projection such that $(1\otimes P_\H)\Delta_\G(P_\H) = P_\H\otimes P_\H$ and
$$\ell^\infty(\H) = \{x\in \ell^\infty(\G) : \Delta_\G(x)(1\otimes P_\H) = x\otimes P_\H\}.$$
Given $\widehat{\K},\widehat{\H}\leq \widehat{\G}$, it follows from minimality that $P_\K P_\H  = P_\H$ if and only if $\widehat{\K}\leq \widehat{\H}$ - note that in this situation $\ell^\infty(\K)\subseteq\ell^\infty(\H)$ via the embeddings $\gamma_{\K}$ and $\gamma_{\H}$ into $\ell^\infty(\G)$ respectively and equivalently the map $\pi^u_{\widehat{\K}} : C_u(\widehat{\G})\to C_u(\widehat{\K})$ factors through the canonical quotient $\pi^u_{\widehat{\H}} : C_u(\widehat{\G})\to C_u(\widehat{\H})$: we have $\pi^u_{\widehat{\K}} = \pi^u_{\widehat{\H},\widehat{\K}}\circ\pi^u_{\widehat{\H}}$ where $\pi^u_{\widehat{\H},\widehat{\K}} : C_u(\widehat{\H})\to C_u(\widehat{\K})$ is the canonical quotient map. Such a claim is surely known, however, we will justify it here (cf. \cite{Soltan} and the proof of \cite[Theorem 3.1]{FK17}). Indeed, if $P_\K P_\H = P_\H$ and $x\in \ell^\infty(\K)$, then $P_\H P_\K = P_\H$ and
$(1\otimes P_\H)\Delta_\G(x) = (1\otimes P_\H P_\K)\Delta_\G(x) = x\otimes P_\H$, which implies $x\in\ell^\infty(\H)$. Conversely, it follows that $(1\otimes P_\H)\Delta_\G(P_\K) = P_\K\otimes P_\H$ and then an application of $\epsilon_\G\otimes\id$ to the above equation gives $P_\H P_\K = P_\H$ (where we are using the fact that $\epsilon_\G(P_\K) = 1$ (this can be observed by applying $\epsilon_\G\otimes\id$ to $P_\K\otimes P_\K = (1\otimes  P_\K)\Delta_\G(P_\K)$)).

More generally, the techniques we used above can be used to prove that $P_\H x = \epsilon_\G(x) P_\H$ for all $x\in \ell^\infty(\H)$ (see also the proof of \cite[Theorem 3.1]{FK17}).
\begin{rem}\label{Comparison of Quotients}
    Let $\widehat{\K}, \widehat{\H}\leq\widehat{\G}$. It is known that
    $$L^\infty(\widehat{\G}/ \widehat{\H})\subseteq L^\infty(\widehat{\G}/\widehat{\K}) \iff \widehat{\K}\leq\widehat{\H} ~ \text{(see \cite[Lemma 2.1]{Soltan})}.$$
    We will provide a proof of this statement here. It is clear that
    $$P_\K P_\H =  P_\H \iff \omega_{\widehat{\G}/\widehat{\K}}*\omega_{\widehat{\G}/\widehat{\H}} = \omega_{\widehat{\G}/\widehat{\H}}.$$
    Therefore,
    \begin{align*}
        \widehat{\K}\leq\widehat{\H} \iff\omega_{\widehat{\G}/\widehat{\K}}*\omega_{\widehat{\G}/\widehat{\H}} = \omega_{\widehat{\G}/\widehat{\H}}&\iff L_{\omega_{\widehat{\G}/\widehat{\K}}}\circ L_{\omega_{\widehat{\G}/\widehat{\H}}} = L_{\omega_{\widehat{\G}/\widehat{\K}}*\omega_{\widehat{\G}/\widehat{\H}}} = L_{\omega_{\widehat{\G}/\widehat{\H}}}
        \\
        &\iff L^\infty(\widehat{\G}/\widehat{\H})\subseteq L^\infty(\widehat{\G}/\widehat{\K}).
    \end{align*}
\end{rem}

\subsection{Coamenability}
The adjoint of the quotient map $\Gamma_{\widehat{\G}} :C_u(\widehat{\G})\to C_r(\widehat{\G})$ is a (completely isometric) algebra inclusion
$$\Gamma_{\widehat{\G}}^* : C_r(\widehat{\G})^*\to C_u(\widehat{\G})^*, ~ \mu\mapsto \mu\circ \Gamma_{\widehat{\G}}.$$
In fact, $\Gamma_{\widehat{\G}}^*(C_r(\widehat{\G})^*)$ is a weak$^*$ closed two-sided ideal in $C_u(\widehat{\G})^*$. We will adopt the convention to identify $C_r(\widehat{\G})^*$ with its image in $C_u(\widehat{\G})^*$. To be completely clear, given an element $\mu\in C_u(\widehat{\G})^*$, when we say $\mu\in C_r(\widehat{\G})^*$ we mean that $\mu\in \Gamma_{\widehat{\G}}^*(C_r(\widehat{\G})^*)$ and there exists an element $\mu^r\in C_r(\widehat{\G})^*$ such that $\mu = \mu^r\circ\Gamma_{\widehat{\G}}$. In some situations we will use the notation $\mu^r\circ\Gamma_{\widehat{\G}}$ to denote the factorization of a functional $\mu$ on $C_u(\widehat{\G})$ through the reducing morphism $\Gamma_{\widehat{\G}}$ (when such a factorization exists).
\begin{defn}
    A CQG $\widehat{\G}$ is {\bf coamenable} if $\epsilon_{\widehat{\G}}^u\in C_r(\widehat{\G})^*$. We say $\G$ is {\bf amenable} whenever there exists a state $m\in \ell^\infty(\G)^*$ satisfying $m\circ(\id\otimes f)\circ\Delta_\G(x) = f(1)m(x)$ for all $x\in\ell^\infty(\G)$ and $f\in \ell^1(\G)$.
\end{defn}
It turns out that a DQG $\G$ is amenable if and only if $\widehat{\G}$ is coamenable. In the unimodular case this was proved by \cite{Ruan} and the general case was proved by Tomatsu \cite{Toma}.

The following definition was stated in \cite{KKSV22}.
\begin{defn}
    Let $\G$ be a DQG and $\widehat{\H}\leq\widehat{\G}$ a closed quantum subgroup of $\widehat{\G}$. We say $\widehat{\G}/\widehat{\H}$ is {\bf coamenable} if there exists a state $\epsilon^r \in C_r(\widehat{\G}/\widehat{\H})^*$ such that $\epsilon^r\circ\Gamma_{\widehat{\G}}|_{C_u(\widehat{\G}/\widehat{\H})} = \epsilon^u_{\widehat{\G}}|_{C_u(\widehat{\G}/\widehat{\H})}$.
\end{defn}
An important characterization of coamenability of a quotient is the following.
\begin{thm}\cite[Theorem 3.11]{KKSV22}\label{quotientcoamenable}
    Let $\G$ be a DQG and $\H$ a DQG such that $\widehat{\H}\leq\widehat{\G}$. We have that $\widehat{\G}/\widehat{\H}$ is a coamenable quotient if and only if there exists a $*$-homomorphism $\pi^r_{\widehat{\H}} : C_r(\widehat{\G})\to C_r(\widehat{\H})$ such that $\pi^r_{\widehat{\H}}\circ \Gamma_{\widehat{\G}} = \Gamma_{\widehat{\H}}\circ\pi_{\widehat{\H}}^u$ if and only if $\omega_{\widehat{\G}/\widehat{\H}}\in C_r(\widehat{\G})^*$.
\end{thm}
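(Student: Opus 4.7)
The plan is to prove the three-way equivalence by separately establishing (B) $\Leftrightarrow$ (C) and (A) $\Leftrightarrow$ (C), where (A), (B), (C) denote coamenability of $\widehat{\G}/\widehat{\H}$, existence of $\pi^r_{\widehat{\H}}$, and $\omega_{\widehat{\G}/\widehat{\H}} \in C_r(\widehat{\G})^*$, respectively. Two inputs from Sections 2.1--2.3 do the heavy lifting: the Haar state of any compact quantum group factors as $h_{\widehat{\H}} = h^r_{\widehat{\H}} \circ \Gamma_{\widehat{\H}}$ with $h^r_{\widehat{\H}}$ faithful on $C_r(\widehat{\H})$, and the reduced conditional expectation $L^r_{\omega_{\widehat{\G}/\widehat{\H}}}$ satisfies $L^r_{\omega_{\widehat{\G}/\widehat{\H}}} \circ \Gamma_{\widehat{\G}} = \Gamma_{\widehat{\G}} \circ L^u_{\omega_{\widehat{\G}/\widehat{\H}}}$.

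For (B) $\Rightarrow$ (C) the plan is to chain the two factorizations:
$$\omega_{\widehat{\G}/\widehat{\H}} = h_{\widehat{\H}} \circ \pi^u_{\widehat{\H}} = h^r_{\widehat{\H}} \circ \Gamma_{\widehat{\H}} \circ \pi^u_{\widehat{\H}} = (h^r_{\widehat{\H}} \circ \pi^r_{\widehat{\H}}) \circ \Gamma_{\widehat{\G}} \in C_r(\widehat{\G})^*.$$
For (C) $\Rightarrow$ (B), writing $\omega_{\widehat{\G}/\widehat{\H}} = \omega^r \circ \Gamma_{\widehat{\G}}$, I will construct $\pi^r_{\widehat{\H}}$ as the descent of $\Gamma_{\widehat{\H}} \circ \pi^u_{\widehat{\H}}$ through $\Gamma_{\widehat{\G}}$, so the real task is the inclusion $\ker \Gamma_{\widehat{\G}} \subseteq \ker(\Gamma_{\widehat{\H}} \circ \pi^u_{\widehat{\H}})$. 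I will use the GNS characterization $\Gamma_{\widehat{\H}}(y) = 0$ iff $h_{\widehat{\H}}(c^* y^* y c) = 0$ for all $c \in C_u(\widehat{\H})$, invoke surjectivity of $\pi^u_{\widehat{\H}}$ to write $c = \pi^u_{\widehat{\H}}(b)$, and compute for $a \in \ker \Gamma_{\widehat{\G}}$:
$$h_{\widehat{\H}}\bigl(\pi^u_{\widehat{\H}}(b^* a^* a b)\bigr) = \omega_{\widehat{\G}/\widehat{\H}}(b^* a^* a b) = \omega^r\bigl(\Gamma_{\widehat{\G}}(b)^* \Gamma_{\widehat{\G}}(a)^* \Gamma_{\widehat{\G}}(a) \Gamma_{\widehat{\G}}(b)\bigr) = 0,$$
which finishes the inclusion.

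For (A) $\Leftrightarrow$ (C) the pivotal observation is the identity $\omega_{\widehat{\G}/\widehat{\H}}|_{C_u(\widehat{\G}/\widehat{\H})} = \epsilon^u_{\widehat{\G}}|_{C_u(\widehat{\G}/\widehat{\H})}$. I would derive this by applying $\epsilon^u_{\widehat{\G}} \otimes \id$ to the defining relation $(\id \otimes \pi^u_{\widehat{\H}}) \Delta^u_{\widehat{\G}}(a) = a \otimes 1$, getting $\pi^u_{\widehat{\H}}(a) = \epsilon^u_{\widehat{\G}}(a) \cdot 1$ for $a$ in the quotient space, and then applying $h_{\widehat{\H}}$. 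Given (C), setting $\epsilon^r := \omega^r|_{C_r(\widehat{\G}/\widehat{\H})}$ then yields the coamenability state directly. Given (A) conversely, I would define $\omega^r := \epsilon^r \circ L^r_{\omega_{\widehat{\G}/\widehat{\H}}}$ and, for $a \in C_u(\widehat{\G})$, carry out the chain
$$\omega^r(\Gamma_{\widehat{\G}}(a)) = \epsilon^r\bigl(\Gamma_{\widehat{\G}}(L^u_{\omega_{\widehat{\G}/\widehat{\H}}}(a))\bigr) = \epsilon^u_{\widehat{\G}}\bigl(L^u_{\omega_{\widehat{\G}/\widehat{\H}}}(a)\bigr) = (\epsilon^u_{\widehat{\G}} * \omega_{\widehat{\G}/\widehat{\H}})(a) = \omega_{\widehat{\G}/\widehat{\H}}(a),$$
using in turn the intertwining, the coamenability hypothesis applied to $L^u(a) \in C_u(\widehat{\G}/\widehat{\H})$, the definition of $L^u$, and the fact that $\epsilon^u_{\widehat{\G}}$ is the identity for convolution in $C_u(\widehat{\G})^*$.

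I do not anticipate a serious obstacle provided the existence of the reduced conditional expectation $L^r_{\omega_{\widehat{\G}/\widehat{\H}}}$ is accepted as part of the preliminaries; unpacking that amounts to showing the normal conditional expectation on $L^\infty(\widehat{\G})$ sends $C_r(\widehat{\G})$ into $C_r(\widehat{\G}/\widehat{\H})$, which reduces to norm continuity together with preservation of $\Pol(\widehat{\G})$.
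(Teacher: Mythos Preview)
The paper does not supply its own proof of this statement; it is quoted from \cite{KKSV22}, and the only indication of method is the remark immediately following the theorem that ``$\Gamma_{\widehat{\H}}\circ\pi_{\widehat{\H}}^u$ is the GNS representation obtained from $\omega_{\widehat{\G}/\widehat{\H}}$.'' Your argument for (C) $\Rightarrow$ (B) is precisely an explicit unpacking of that GNS identification: the kernel computation $h_{\widehat{\H}}\bigl(\pi^u_{\widehat{\H}}(b^*a^*ab)\bigr)=\omega^r\bigl(\Gamma_{\widehat{\G}}(b)^*\Gamma_{\widehat{\G}}(a)^*\Gamma_{\widehat{\G}}(a)\Gamma_{\widehat{\G}}(b)\bigr)=0$ is exactly the verification that the GNS kernel of $\omega_{\widehat{\G}/\widehat{\H}}$ contains $\ker\Gamma_{\widehat{\G}}$ once $\omega_{\widehat{\G}/\widehat{\H}}=\omega^r\circ\Gamma_{\widehat{\G}}$. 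So on the (B) $\Leftrightarrow$ (C) half you are taking the same route as the cited source, just written out by hand.

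Your treatment of (A) $\Leftrightarrow$ (C) via the reduced conditional expectation $L^r_{\omega_{\widehat{\G}/\widehat{\H}}}$ and the identity $\omega_{\widehat{\G}/\widehat{\H}}|_{C_u(\widehat{\G}/\widehat{\H})}=\epsilon^u_{\widehat{\G}}|_{C_u(\widehat{\G}/\widehat{\H})}$ is correct and self-contained; the chain $\omega^r\circ\Gamma_{\widehat{\G}}=\epsilon^r\circ\Gamma_{\widehat{\G}}\circ L^u_{\omega_{\widehat{\G}/\widehat{\H}}}=\epsilon^u_{\widehat{\G}}\circ L^u_{\omega_{\widehat{\G}/\widehat{\H}}}=\epsilon^u_{\widehat{\G}}*\omega_{\widehat{\G}/\widehat{\H}}=\omega_{\widehat{\G}/\widehat{\H}}$ goes through exactly as you describe. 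The only caveat you already flag yourself: the existence of $L^r_{\omega_{\widehat{\G}/\widehat{\H}}}$ with the intertwining property is stated without proof in Section~2.3 of the paper, so you are entitled to use it. The proposal is correct.
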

It turns out that $\Gamma_{\widehat{\H}}\circ\pi_{\widehat{\H}}^u$ is the GNS representation obtained from $\omega_{\widehat{\G}/\widehat{\H}}$ (see the proof of \cite[Theorem 3.11]{KKSV22}). So, we have that $\widehat{\G}/\widehat{\H}$ is coamenable if and only if $\omega_{\widehat{\G}/\widehat{\H}}\in C_r(\widehat{\G})^*$.

Let $G$ be a discrete group $N$ a normal subgroup of $G$ so that $\widehat{G/N}\leq \widehat{G}$. It is well-known that the quotient map $\C[G]\to \C[G/N]$ extends to a quotient map $\pi_{\widehat{G/N}}^r : C_r(\widehat{G}) \to C_r(\widehat{G/N})$ as in the statement of Theorem \ref{quotientcoamenable} if and only if $N$ is amenable. In this regard, we can view a coamenable quotient as being a quantum analogue of a normal amenable subgroup (or rather the ``codual'' analogue).
\begin{rem}\label{Hereditary Properties of Coamenability}
    Let $\widehat{\K}\leq \widehat{\H}\leq \widehat{\G}$. Suppose that $\widehat{\G}/\widehat{\K}$ is coamenable. Then $\widehat{\G}/\widehat{\H}$ is coamenable. Indeed, using Theorem \ref{quotientcoamenable} we have $\omega_{\widehat{\G}/\widehat{\K}}\in C_r(\widehat{\G})^*$. By the calculations in Remark \ref{Comparison of Quotients}, $\omega_{\widehat{\G}/\widehat{\H}} = \omega_{\widehat{\G}/\widehat{\K}}*\omega_{\widehat{\G}/\widehat{\H}} \in C_r(\widehat{\G})^*$ where we are using the fact $C_r(\widehat{\G})^*$ is an ideal in $C_u(\widehat{\G})^*$. We then deduce that $\widehat{\G}/\widehat{\H}$ is coamenable by using Theorem \ref{quotientcoamenable}.
\end{rem}
\subsection{Furstenberg Boundaries and Crossed Products}
Let $\G$ be a DQG and $A$ be a unital $C^*$-algebra.
\begin{defn}
    $A$ is a {\bf $\G$-$C^*$-algebra} if there exists a unital injective $*$-homomorphism $\alpha : A\to M(c_0(\G)\otimes_{min} A)$ satisfying
    \begin{itemize}
        \item $(\id\otimes\alpha)\circ\alpha = (\Delta_\G\otimes\id)\circ\alpha$;
        \item the closed linear span of $(c_0(\G)\otimes 1)\alpha(A)$ is norm dense in $c_0(\G)\otimes_{min}A$ (Podle\'s Condition).
    \end{itemize}
    We call $\alpha$ a (left) {\bf coaction} of $\G$ on $A$.
\end{defn}
We have that $\ell^\infty(\G)$ is a $\G$-$C^*$-algebra where the coaction is equal to the coproduct $\Delta_\G$.

For a $\G$-$C^*$-algebra $A$, will use the notation $a*f = (f\otimes\id)\alpha(a)$ for $a\in A$ and $f\in \ell^1(\G)$. Given $\G$-$C^*$-algebras $A$ and $B$, we will say a cb-map $\phi : A\to B$ is {\bf $\G$-equivariant} if for all $a\in A$ and $f\in \ell^1(\G)$, we have $\phi(a)*f = \phi(a*f)$. For any idempotent $\G$-equivariant ucp map $\phi : \ell^\infty(\G)\to \ell^\infty(\G)$, the space $\phi(\ell^\infty(\G))$ is a $\G$-$C^*$-algebra when considered as a $C^*$-algebra with the Choi-Effros product \cite[Theorem 4.9]{KKSV22}. The Choi-Effros product is defined by setting $a\cdot b := \phi(ab)$. As observed in \cite{KKSV22}, every $\G$-equivariant ucp map $A\to \ell^\infty(\G)$ is equal to the Poisson transform of some functional on $A$.
\begin{defn}
    Let $A$ be a $\G$-$C^*$-algebra. The {\bf Poisson transform} of a functional $\mu\in A^*$ is the map $\fP_\mu : A\to \ell^\infty(\G)$ defined by setting
    $$f\circ\fP_\mu(a) = \mu(a*f), ~f\in \ell^1(\G), a\in A.$$
\end{defn}
\begin{eg}\label{dualcoaction}
    The unital $C^*$-algebra $C_r(\widehat{\G})$ is a $\G$-$C^*$-algebra via the coaction $\Delta^l_\G( a) = W^*_\G(1\otimes  a)W_\G$. The coaction $\Delta^l_\G$ above is known as the {\bf adjoint coaction}.
\end{eg}
The {\bf reduced crossed product} of a $\G$-$C^*$-algebra $A$ and $\G$ is the closed linear span of $(C_r(\widehat{\G})\otimes 1)\alpha(A)$ in $M(\fK(\ell^2(\G))\otimes_{min} A)$ (see \cite{V05}). We denote the reduced crossed product of $A$ with $\G$ by $A\rtimes_r\G$. It turns out that $A\rtimes_r\G$ is a $\G$-$C^*$-algebra (see \cite[Lemma 2.11]{KKSV22} and the preceding subsections), with coaction $\beta : A\rtimes_r\G\to M(c_0(\G)\otimes_{min} A\rtimes_r\G)$ defined by setting $\beta(A) = W^*_{12} A_{23}W_{12}$. The coaction $\beta$ satisfies
\begin{itemize}
    \item $\beta|_{C_r(\widehat{\G})\otimes 1} = \Delta_\G^l\otimes \id$;
    \item $\beta|_{\alpha(A)} = \id\otimes\alpha$.
\end{itemize}
This makes the canonical embeddings of $A$ and $C_r(\widehat{\G})$ into $A\rtimes_r\G$ $\G$-equivariant.

The Furstenberg boundary was constructed for DQGs in \cite{KKSV22} with the purpose of obtaining quantum group dynamic results for DQGs. Its role was instrumental in the resolution of problems like characterizing the unique trace property and $C^*$-simplicity of reduced $C^*$-algebras of groups, as well as clarifying the relationship between the unique trace property and $C^*$-simplicity (see \cite{KK14, BKKO17}).
\begin{defn}
    We say a $\G$-$C^*$-algebra $I$ is {\bf $\G$-injective} if for any $\G$-$C^*$-algebras $A$ and $B$ with ucp $\G$-equivariant ucp maps $\psi: A\to I$ and $\iota : A\to B$, where $\iota$ is completely isometric, there exists a ucp $\G$-equivariant map $\phi : B\to I$ such that $\psi = \phi\circ\iota$.
\end{defn}
The Furstenberg boundary is constructed via \cite[Proposition 4.10, Corollary 4.11, and Proposition 4.12]{KKSV22}. It is defined by the $\G$-$C^*$-algebra $C(\partial_F\G) := \Phi_0(\ell^\infty(\G))$ where $\Phi_0$ is a minimal idempotent $\G$-equivariant ucp map in the sense of \cite[Definition 4.7]{KKSV22}. It turns out $C(\partial_F\G)$ is $\G$-injective, and if $A$ is a $\G$-injective $C^*$-algebra, then there is a $\G$-equivariant ucp embedding $C(\partial_F(\G))\subseteq A$. It also satisfies the following properties:
\begin{itemize}
    \item $\G$-essentiallity: every $\G$-equivariant ucp map $C(\partial_F\G)\to \ell^\infty(\G)$ is completely isometric;
    \item $\G$-rigidity: if $\Phi : C(\partial_F\G)\to C(\partial_F\G)$ is a $\G$-equivariant ucp map, then $\Phi = \id$;
\end{itemize}
(cf. \cite[Proposition 4.10 Proposition 4.13]{KKSV22}).

Of critical importance to us is the cokernel of the Furstenberg bounday.
\begin{defn}
    The {\bf cokernel} of $\partial_F(\G)$ is defined by the subalgebra
    $$N_F := \{\fP_\mu(a) : a\in C(\partial_F(\G)), \mu\in C(\partial_F(\G))^*\}''\subseteq\ell^\infty(\G).$$
\end{defn}
It was proved with \cite[Proposition 2.9]{KKSV22} that $N_F$ is a so-called Baaj-Vaes subalgebra of $\ell^\infty(\G)$ (see \cite[page 7]{KKSV22} for a definition). It follows from \cite[Proposition A.5]{BV05} that there is a bijection between closed quantum subgroups of $\widehat{\G}$ and Baaj-Vaes subalgebras of $\ell^\infty(\G)$ (note that this is discussed on \cite[page 7]{KKSV22}). More precisely, $N\subseteq\ell^\infty(\G)$ is a Baaj-Vaes subalgebra if and only if there exists a closed quantum subgroup $\widehat{\H}\leq\widehat{\G}$ such that $N = \ell^\infty(\H)$. In particular, there exists a closed quantum subgroup $\widehat{\G}_F\leq \widehat{\G}$, where we denote $\G_F := \widehat{\widehat{\G}_F}$, such that
$$\ell^\infty(\G_F) = N_F.$$
We will also call $\G_F$ the cokernel of $\partial_F(\G)$. We say the action of $\G$ on $\partial_F(\G)$ is {\bf faithful} when $\G_F = \G$.
\begin{rem}
    For a discrete group $G$, the kernel of the action of $G$ on $\partial_F(G)$ is $R_a(G)$, the amenable radical of $G$. Then the cokernel is $G/R_a(G)$ (cf. \cite{BKKO17}).
\end{rem}

\section{Traces on Quantum Groups}

\subsection{The Kac Property and Canonical Kac Quotient}
\begin{defn}
    A DQG $\G$ is {\bf unimodular} if $h_L = h_R$. We say $\widehat{\G}$ is {\bf Kac type} if $h_{\widehat{\G}}$ is a tracial state.
\end{defn}
Unimodularity of $\G$ is well-known to be equivalent to Kacness of $\widehat{\G}$.
\begin{thm}
    A DQG $\G$ is unimodular if and only if $\widehat{\G}$ is Kac type.
\end{thm}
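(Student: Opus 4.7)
The plan is to reduce both sides of the equivalence to a common invariant: Woronowicz's family of modular matrices $\{F_\pi\}_{\pi \in Irr(\widehat{\G})}$ attached to the irreducible corepresentations of $\widehat{\G}$. For each $\pi \in Irr(\widehat{\G})$ there is a canonical positive invertible $F_\pi \in M_{n_\pi}$, normalized so that $\tr(F_\pi) = \tr(F_\pi^{-1})$, which measures the failure of $S^2 = \id$ on the span of the matrix coefficients of $\pi$. I will show that each of the two conditions in the theorem is equivalent to $F_\pi = I_{n_\pi}$ for every $\pi$, which immediately gives the equivalence.

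On the compact side, Woronowicz's orthogonality relations assert that, for $\pi,\sigma\in Irr(\widehat{\G})$,
\begin{equation*}
h_{\widehat{\G}}\bigl(u^\pi_{ij}(u^\sigma_{kl})^*\bigr) = \delta_{\pi,\sigma}\,\frac{\delta_{jl}\,(F_\pi^{-1})_{ki}}{\tr(F_\pi)}, \qquad h_{\widehat{\G}}\bigl((u^\sigma_{kl})^*u^\pi_{ij}\bigr) = \delta_{\pi,\sigma}\,\frac{\delta_{ik}\,(F_\pi)_{lj}}{\tr(F_\pi)}.
\end{equation*}
Since $\Pol(\widehat{\G})$ is norm-dense in $C_r(\widehat{\G})$, comparing these two expressions entry-by-entry shows that $h_{\widehat{\G}}$ is tracial if and only if $F_\pi = F_\pi^{-1}$ for every $\pi$, and positivity of $F_\pi$ then forces $F_\pi = I_{n_\pi}$.

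On the discrete side, the identification $\ell^\infty(\G) = \bigoplus^{\ell^\infty}_\pi M_{n_\pi}$ combined with the duality between $\ell^1(\G)$ and $\Pol(\widehat{\G})$ yields the standard explicit formulas for the Haar weights of $\G$: up to a common positive scalar one has $h_L(x) = \sum_\pi n_\pi\,\tr(F_\pi\, x_\pi)$ and $h_R(x) = \sum_\pi n_\pi\,\tr(F_\pi^{-1}\, x_\pi)$ for $x=(x_\pi)$ in the appropriate weight domain. Thus $h_L = h_R$ is equivalent to $F_\pi = F_\pi^{-1} = I_{n_\pi}$ for every $\pi$, which combined with the previous paragraph proves the theorem. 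The main obstacle is purely bookkeeping: several incompatible sign and inverse conventions for the $F$-matrices are used in the literature, and one must be careful to fix a single convention throughout the extraction of the formulas for both $h_{\widehat{\G}}$ on $C_r(\widehat{\G})$ and $h_L,h_R$ on $\ell^\infty(\G)$. A more conceptual alternative would proceed through the common scaling group $\tau_t$ of the dual pair, showing that $\tau_t = \id$ is equivalent both to $\widehat{\G}$ being Kac (via $\sigma^{h_{\widehat{\G}}}_t = \tau^{\widehat{\G}}_t$, which holds because $\widehat{\G}$ is LCQG-unimodular as a CQG) and to the modular element $\delta_\G$ being trivial.
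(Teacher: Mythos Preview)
Your argument is correct and is in fact the standard route to this well-known equivalence. The paper itself does not supply a proof: it simply records the statement as known, so there is no ``paper's own proof'' to compare against. Your reduction of both conditions to $F_\pi = I_{n_\pi}$ for all $\pi$ via the Woronowicz orthogonality relations on the compact side and the explicit block decomposition of $h_L,h_R$ on the discrete side is exactly how this is usually established.

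One small correction that does not affect the logic: in the explicit Haar weight formulas on $\ell^\infty(\G)$, the coefficient in front of $\tr(F_\pi^{\pm 1}x_\pi)$ is the quantum dimension $d_\pi=\tr(F_\pi)$ rather than the classical dimension $n_\pi$, and this is not absorbed by a single overall scalar since it varies with $\pi$. Nonetheless, equating $h_L$ and $h_R$ blockwise still yields $c_\pi F_\pi = c_\pi' F_\pi^{-1}$ for positive scalars $c_\pi,c_\pi'$, hence $F_\pi^2$ is a scalar, and the normalization $\tr(F_\pi)=\tr(F_\pi^{-1})$ forces $F_\pi = I_{n_\pi}$; so your conclusion stands. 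Your parenthetical ``conceptual alternative'' via the scaling group is also viable, though the asserted identity $\sigma^{h_{\widehat{\G}}}_t = \tau^{\widehat{\G}}_t$ is not quite the right statement in general; one should instead use that on matrix coefficients both $\sigma^{h_{\widehat{\G}}}_t$ and $\tau_t$ are implemented by powers of the $F_\pi$, so triviality of either is again equivalent to $F_\pi = I_{n_\pi}$.
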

From now on $\G$ is a DQG. Now, let us recall the canonical Kac quotient constructed by So\l tan \cite{S06}. Define the ideal
$$I_{Kac} = \{a \in C_u(\widehat{\G}) : \tau(aa^*) = 0 ~\text{for every tracial state} ~\tau\in C_u(\widehat{\G})^* \}.$$
The $C^*$-algebra $C_u(\widehat{\G})/ I_{Kac}$ is a CQG $C^*$-algebra. We let $\widehat{\G}_{Kac}$ denote the underlying CQG and call it the {\bf canonical Kac quotient} of $\widehat{\G}$. Note that So\l tan's construction was applied to any (possibly exotic) CQG $C^*$-algebra of $\G$ whereas in our work (and others \cite{DFS21}) we canonically apply So\l tan's construction to the universal CQG $C^*$-algebra. Also, the construction $C_u(\widehat{\G})/ I_{Kac}$ above is not necessarily universal. In lieu of Lemma \ref{Passing CQGs}, the potentially exotic setting passes to the universal setting and, due to So\l tan's work, we have that $\widehat{\G}_{Kac}$ is a closed quantum subgroup of $\widehat{\G}$. We will denote 
$$C_{uKac}(\widehat{\G}_{Kac}) = C_u(\widehat{\G}) / I_{Kac}$$
as the corresponding CQG $C^*$-algbera of $\widehat{\G}_{Kac}$. We emphasize that it is unclear if the canonical quotient map $C_u(\widehat{\G}_{Kac})\to C_{uKac}(\widehat{\G}_{Kac})$ is injective in general.
\begin{rem}
\begin{itemize}
    \item The canonical quotient $C_{uKac}(\widehat{\G}_{Kac}) \to C_r(\widehat{\G}_{Kac})$ is injective if and only if $\widehat{\G}_{Kac}$ is coamenable. Indeed, suppose $C_r(\widehat{\G}_{Kac}) = C_u(\widehat{\G})/I_{Kac}$. The counit $\epsilon_{\widehat{\G}} : C_u(\widehat{\G})\to \C$ is, in particular, a trace, hence $I_{Kac}\subseteq \ker(\epsilon_{\widehat{\G}})$. Therefore $\epsilon_{\widehat{\G}}$ factors through a $*$-homomorphism $C_r(\widehat{\G}_{Kac}) \to \C$ and hence $\widehat{\G}_{Kac}$ is coamenable by \cite[Theorem 2.8]{BML01}. The converse is obvious.

    \item Assume $\widehat{\G}$ is a Kac type CQG such that $C_u(\widehat{\G})$ is residually finite dimensional (see \cite{DFS21} for a definition). It is an easy exercise to prove that the canonical quotient map $C_u(\widehat{\G})\to C_{uKac}(\widehat{\G}_{Kac})$ is injective (for example, see \cite{DFS21}).
\end{itemize}
\end{rem}
On the other hand, there exists a Kac type quantum subgroup $\widehat{\G}_{Max Kac}\leq \widehat{\G}$ satisfying the property that $\widehat{\H}\leq \widehat{\G}_{Max Kac}$ for every Kac type quantum subgroup $\widehat{\H}\leq \widehat{\G}$ (\cite[Theorem 5.9]{Ch15}, \cite{T07, FFS23}). A priori we do not have $\widehat{\G}_{Kac} = \widehat{\G}_{MaxKac}$, but this equality holds none-the-less and appears to be folklore among experts (see \cite[Section 2.3]{NY16}).
\begin{prop}\label{Canonical Kac Quotient is Maximal}
    We have that $\widehat{\G}_{MaxKac} = \widehat{\G}_{Kac}$.
\end{prop}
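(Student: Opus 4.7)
The plan is to prove the equality by establishing both containments $\widehat{\G}_{Kac}\leq \widehat{\G}_{MaxKac}$ and $\widehat{\G}_{MaxKac}\leq \widehat{\G}_{Kac}$, each by invoking the defining universal property of the corresponding object.

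The first inclusion is almost immediate. By So\l tan's construction, the quotient $C_{uKac}(\widehat{\G}_{Kac}) = C_u(\widehat{\G})/I_{Kac}$ carries a tracial Haar state, so $\widehat{\G}_{Kac}$ is itself a Kac type closed quantum subgroup of $\widehat{\G}$; the maximality property of $\widehat{\G}_{MaxKac}$ recalled from \cite[Theorem 4.4.1]{Ch15} then yields $\widehat{\G}_{Kac} \leq \widehat{\G}_{MaxKac}$.

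For the reverse inclusion I would work at the level of polynomial algebras, as this is the level at which the notion of closed quantum subgroup is formulated in this paper. The image of $\Pol(\widehat{\G})$ in $C_{uKac}(\widehat{\G}_{Kac}) = C_u(\widehat{\G})/I_{Kac}$ is precisely $\Pol(\widehat{\G}_{Kac}) \cong \Pol(\widehat{\G})/(I_{Kac}\cap\Pol(\widehat{\G}))$, so establishing $\widehat{\G}_{MaxKac}\leq \widehat{\G}_{Kac}$ amounts to showing that the Hopf $*$-algebra surjection $\pi_{\widehat{\G}_{MaxKac}} : \Pol(\widehat{\G})\to \Pol(\widehat{\G}_{MaxKac})$ vanishes on $I_{Kac}\cap\Pol(\widehat{\G})$, for then it will factor through a coproduct-intertwining surjection $\Pol(\widehat{\G}_{Kac})\to\Pol(\widehat{\G}_{MaxKac})$. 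Fix $a\in I_{Kac}\cap\Pol(\widehat{\G})$. Since $\widehat{\G}_{MaxKac}$ is Kac type, its Haar state $h_{\widehat{\G}_{MaxKac}}$ is tracial on $C_u(\widehat{\G}_{MaxKac})$, and pulling back through $\pi^u_{\widehat{\G}_{MaxKac}}$ produces a tracial state $\tau := h_{\widehat{\G}_{MaxKac}}\circ\pi^u_{\widehat{\G}_{MaxKac}}$ on $C_u(\widehat{\G})$. By the definition of $I_{Kac}$, $\tau(a^*a) = 0$, i.e.,
\[ h_{\widehat{\G}_{MaxKac}}\bigl(\pi_{\widehat{\G}_{MaxKac}}(a)^*\pi_{\widehat{\G}_{MaxKac}}(a)\bigr) = 0. \]
Since $\pi_{\widehat{\G}_{MaxKac}}(a) \in \Pol(\widehat{\G}_{MaxKac})$ and the Haar state of a CQG is faithful on its polynomial $*$-algebra, this forces $\pi_{\widehat{\G}_{MaxKac}}(a) = 0$, as required.

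The only real subtlety I anticipate is matching the two descriptions across levels: $\widehat{\G}_{Kac}$ is defined via a $C^*$-ideal inside $C_u(\widehat{\G})$, whereas a closed quantum subgroup in the sense used here is encoded by a Hopf $*$-algebra quotient of $\Pol(\widehat{\G})$. This is reconciled by the standard identification of $\Pol(\widehat{\G}_{Kac})$ with the image of $\Pol(\widehat{\G})$ inside $C_{uKac}(\widehat{\G}_{Kac})$; once this is in place, the argument reduces to a single application of the defining tracial property of $I_{Kac}$ together with the classical fact that $h_{\widehat{\G}_{MaxKac}}$ is faithful on $\Pol(\widehat{\G}_{MaxKac})$.
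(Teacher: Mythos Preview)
Your proof is correct and shares the same key observation as the paper's: the pullback $h_{\widehat{\G}_{MaxKac}}\circ\pi^u_{\widehat{\G}_{MaxKac}}$ is a tracial state on $C_u(\widehat{\G})$ and hence annihilates $I_{Kac}$ (in the sense that $a\in I_{Kac}$ implies $h_{\widehat{\G}_{MaxKac}}(\pi_{\widehat{\G}_{MaxKac}}(a)^*\pi_{\widehat{\G}_{MaxKac}}(a))=0$). The concluding steps differ slightly. You work at the polynomial level and invoke faithfulness of $h_{\widehat{\G}_{MaxKac}}$ on $\Pol(\widehat{\G}_{MaxKac})$ to factor the Hopf $*$-algebra surjection $\pi_{\widehat{\G}_{MaxKac}}$ through $\Pol(\widehat{\G}_{Kac})$, obtaining $\widehat{\G}_{MaxKac}\leq\widehat{\G}_{Kac}$ directly. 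The paper instead factors the \emph{functional} $h_{\widehat{\G}_{MaxKac}}\circ\pi_{\widehat{\G}_{MaxKac}}$ through $C_{uKac}(\widehat{\G}_{Kac})$ and then uses a convolution identity, computing $h_{\widehat{\G}_{MaxKac}} = h_{\widehat{\G}_{MaxKac}}*(h_{\widehat{\G}_{Kac}}\circ q_{Kac}^{MaxKac}) = h_{\widehat{\G}_{Kac}}\circ q_{Kac}^{MaxKac}$ to force equality of the two CQGs. Your route is a bit more elementary and sidesteps the convolution trick; the paper's stays entirely at the $C^*$-level and avoids the bookkeeping of identifying $\Pol(\widehat{\G}_{Kac})$ with $\Pol(\widehat{\G})/(I_{Kac}\cap\Pol(\widehat{\G}))$.
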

\begin{proof}
    Let $q_{Kac} : C_u(\widehat{\G})\to C_{uKac}(\widehat{\G}_{Kac}) = C_u(\widehat{\G})/I_{Kac}$ be the quotient map, which was shown by So\l tan \cite{S06} to satisfy
    $$(q_{Kac}\otimes q_{Kac})\circ \Delta_{\widehat{\G}} = \Delta_{\widehat{\G}_{Kac}}^{uKac}\circ q_{Kac}.$$
    By definition, $\widehat{\G}_{Kac}\leq \widehat{\G}_{Max Kac}$. Consider the composition of quotient maps
    $$q_{Kac}^{Max Kac} : C_u(\widehat{\G}_{Max Kac}) \to C_u(\widehat{\G}_{Kac}) \to C_{uKac}(\widehat{\G}_{Kac}).$$
    The Haar state $h_{\widehat{\G}_{Max Kac}}$ is tracial, hence $h_{\widehat{\G}_{Max Kac}}\circ\pi^u_{\widehat{\G}_{Max Kac}}(a^*a) = 0$ for every $a\in I_{Kac}$. By an obvious Cauchy-Schwarz argument
    $$h_{\widehat{\G}_{Max Kac}}\circ\pi^u_{\widehat{\G}_{Max Kac}}|_{I_{Kac}} = 0.$$
    Therefore, there exists a state $h_{\widehat{\G}_{Max Kac}}^{Kac}\in C_{uKac}(\widehat{\G}_{Kac})^*$ such that $$h_{\widehat{\G}_{Max Kac}}\circ \pi^u_{\widehat{\G}_{Max Kac}} = h_{\widehat{\G}_{Max Kac}}^{Kac}\circ q_{Kac} = h_{\widehat{\G}_{Max Kac}}^{Kac}\circ q_{Kac}^{Max Kac}\circ \pi^u_{\widehat{\G}_{Max Kac}}.$$
    Hence
    $$h_{\widehat{\G}_{Max Kac}} = h_{\widehat{\G}_{Max Kac}}^{Kac}\circ q_{Kac}^{Max Kac}.$$
    Now, observe that
    \begin{align*}
        h_{\widehat{\G}_{Max Kac}} &= h_{\widehat{\G}_{Max Kac}}*(h_{\widehat{\G}_{Kac}}\circ q_{Kac}^{Max Kac})
        \\
        &= (h_{\widehat{\G}_{Max Kac}}^{Kac}\circ q_{Kac}^{Max Kac})*(h_{\widehat{\G}_{Kac}}\circ q_{Kac}^{Max Kac})
        \\
        &= (h_{\widehat{\G}_{Max Kac}}^{Kac}*h_{\widehat{\G}_{Kac}})\circ q_{Kac}^{Max Kac}
        \\
        &= h_{\widehat{\G}_{Kac}}\circ q_{Kac}^{Max Kac}
    \end{align*}
    By Lemma \ref{Passing CQGs}, the restriction $q := q_{Kac}^{Max Kac}|_{\Pol(\widehat{\G})} : \Pol(\widehat{\G})\to \Pol(\widehat{\G}_{Kac})$ is a surjective unital $*$-homomorphism and is injective because $h_{\widehat{\G}_{Max Kac}} = h_{\widehat{\G}_{Kac}}\circ q$.
\end{proof}
So\l tan's canonical Kac quotient always exists and can always be obtained from $C_u(\widehat{\G})$, as discussed above. A natural question is then, what if we attempt this construction on the reduced $C^*$-algebra of $\G$ instead? For starters, $C_r(\widehat{\G})$ may not admit a tracial state, so it might be that
$$I^r_{Kac} := \{a\in C_r(\widehat{\G}) : \tau(aa^*) = 0 ~ \text{for every tracial state} ~\tau\in C_r(\widehat{\G})\} = C_r(\widehat{\G}).$$
On the other hand, if $C_r(\widehat{\G})$ admits a trace, then there exists a Kac type CQG $\widehat{\G}_{rKac}$ that underlies $C_r(\widehat{\G}) / I_{Kac}^r$ and we set
$$C_{rKac}(\widehat{\G}_{rKac}) := C_r(\widehat{\G}) / I_{Kac}^r.$$
Like before, we can use Lemma \ref{Passing CQGs} to pass the above CQG $C^*$-algebras to the universal setting and we get that $\widehat{\G}_{rKac}\leq \widehat{\G}$. It turns out this construction still yields the canonical Kac quotient of $\widehat{\G}$ obtained in the universal setting. 
\begin{prop}\label{Coamenable Kac Quotient}
    Suppose $C_r(\widehat{\G})$ admits a tracial state. Then $\widehat{\G}_{rKac} = \widehat{\G}_{Kac}$ and $\widehat{\G} / \widehat{\G}_{rKac}$ is coamenable.
\end{prop}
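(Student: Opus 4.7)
The plan is to exhibit a tracial Haar idempotent on $C_r(\widehat{\G})$ and leverage it to establish both claims. First I would verify that $I^r_{Kac}$ is a Hopf $C^*$-ideal of $C_r(\widehat{\G})$---so that $C_{rKac}(\widehat{\G}_{rKac})$ becomes a CQG $C^*$-algebra and $\widehat{\G}_{rKac}$ is Kac type---by adapting So\l tan's argument for $I_{Kac}$: the convolution of two tracial states on $C_r(\widehat{\G})$ is again tracial, since the tensor product of traces is a trace and $\Delta^r_{\widehat{\G}}$ is a $*$-homomorphism. Since $\widehat{\G}_{rKac}$ is Kac, Proposition \ref{Canonical Kac Quotient is Maximal} immediately gives $\widehat{\G}_{rKac} \leq \widehat{\G}_{Kac}$.

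For coamenability of $\widehat{\G}/\widehat{\G}_{rKac}$, set $\omega^r := h_{\widehat{\G}_{rKac}}\circ q^r_{Kac} \in C_r(\widehat{\G})^*$, where $q^r_{Kac} : C_r(\widehat{\G})\to C_{rKac}(\widehat{\G}_{rKac})$ is the quotient. Then $\omega^r\circ \Gamma_{\widehat{\G}}$ is a tracial idempotent state on $C_u(\widehat{\G})$, hence a Haar idempotent by Theorem \ref{HaarIdempotents}. Identifying the underlying closed quantum subgroup via the GNS representation---which factors through $C_{rKac}(\widehat{\G}_{rKac})$ and recovers $\Pol(\widehat{\G}_{rKac})$ at the algebraic level---yields $\omega^r\circ \Gamma_{\widehat{\G}} = \omega_{\widehat{\G}/\widehat{\G}_{rKac}}$. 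Theorem \ref{quotientcoamenable} then yields the coamenability of $\widehat{\G}/\widehat{\G}_{rKac}$.

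For the reverse inclusion $\widehat{\G}_{Kac} \leq \widehat{\G}_{rKac}$, Remark \ref{Hereditary Properties of Coamenability} propagates coamenability to $\widehat{\G}/\widehat{\G}_{Kac}$, and Theorem \ref{quotientcoamenable} produces a surjective $*$-homomorphism $\pi^r_{\widehat{\G}_{Kac}} : C_r(\widehat{\G})\to C_r(\widehat{\G}_{Kac})$. Since $h_{\widehat{\G}_{Kac}}$ is faithful on the reduced $C^*$-algebra $C_r(\widehat{\G}_{Kac})$, the trace $h_{\widehat{\G}_{Kac}}\circ \pi^r_{\widehat{\G}_{Kac}}$ on $C_r(\widehat{\G})$ has left kernel precisely $\ker(\pi^r_{\widehat{\G}_{Kac}})$, and this kernel must therefore contain $I^r_{Kac}$. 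Hence $\pi^r_{\widehat{\G}_{Kac}}$ factors through $C_{rKac}(\widehat{\G}_{rKac})$ as a surjective CQG morphism $\tilde{\pi} : C_{rKac}(\widehat{\G}_{rKac})\to C_r(\widehat{\G}_{Kac})$. Restricting to the dense Hopf $*$-subalgebras gives a surjective Hopf $*$-algebra morphism $\Pol(\widehat{\G}_{rKac})\to \Pol(\widehat{\G}_{Kac})$ compatible with coproducts, i.e., $\widehat{\G}_{Kac}\leq \widehat{\G}_{rKac}$, whence equality with the earlier inclusion.

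The delicate point is the final paragraph: the inclusion $\widehat{\G}_{rKac}\leq \widehat{\G}_{Kac}$ is automatic from maximality of the Kac quotient, but the reverse relies crucially on the coamenability of $\widehat{\G}/\widehat{\G}_{rKac}$ established in the previous step, as that is what ultimately supplies the reduced morphism $\pi^r_{\widehat{\G}_{Kac}}$. Faithfulness of the Haar state on $C_r(\widehat{\G}_{Kac})$ is equally essential, as it pins down $\ker(\pi^r_{\widehat{\G}_{Kac}})$ in terms of the trace and thereby links it to $I^r_{Kac}$.
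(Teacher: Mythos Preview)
Your argument is correct and follows essentially the same route as the paper: both establish $\widehat{\G}_{rKac}\leq\widehat{\G}_{Kac}$ from Kac-ness and maximality, prove coamenability of $\widehat{\G}/\widehat{\G}_{rKac}$ via Theorem~\ref{quotientcoamenable}, propagate this to $\widehat{\G}/\widehat{\G}_{Kac}$ via Remark~\ref{Hereditary Properties of Coamenability}, and then use that the tracial state $\omega_{\widehat{\G}/\widehat{\G}_{Kac}}\in C_r(\widehat{\G})^*$ factors through $C_{rKac}(\widehat{\G}_{rKac})$ to obtain the reverse inclusion. The only cosmetic differences are that the paper verifies coamenability by directly checking that the composite quotient $C_r(\widehat{\G})\to C_{rKac}(\widehat{\G}_{rKac})\to C_r(\widehat{\G}_{rKac})$ satisfies $\pi^r_{\widehat{\G}_{rKac}}\circ\Gamma_{\widehat{\G}}=\Gamma_{\widehat{\G}_{rKac}}\circ\pi^u_{\widehat{\G}_{rKac}}$ (rather than passing through Theorem~\ref{HaarIdempotents} and a GNS identification), and finishes the reverse inclusion by the Haar-state convolution trick of Proposition~\ref{Canonical Kac Quotient is Maximal} rather than your morphism-factoring argument; both variants are equivalent.
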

\begin{proof}
    It is easily checked that the composition of quotient maps
    $$\pi^r_{\widehat{\G}_{rKac}} : C_r(\widehat{\G})\to C_{rKac}(\widehat{\G}_{rKac}) \to C_{r}(\widehat{\G}_{rKac})$$
    satisfies $(\pi^r_{\widehat{\G}_{rKac}}\otimes \pi^r_{\widehat{\G}_{rKac}})\circ\Delta_{\widehat{\G}} = \Delta_{\widehat{\G}_{rKac}}\circ \pi^r_{\widehat{\G}_{rKac}}$ and  $\pi^r_{\widehat{\G}_{rKac}}\circ \Gamma_{\widehat{\G}} = \Gamma_{\widehat{\G}_{rKac}}\circ \pi^u_{\widehat{\G}_{rKac}}$. Hence by Theorem \ref{quotientcoamenable} we have that $\widehat{\G}/\widehat{\G}_{rKac}$ is coamenable.

    Since $\widehat{\G}_{rKac}$ is Kac type, $\widehat{\G}_{rKac}\leq \widehat{\G}_{Kac}$. Then $\widehat{\G} / \widehat{\G}_{Kac}$ is coamenable thanks to Remark \ref{Hereditary Properties of Coamenability}. In particular, $\omega_{\widehat{\G}/\widehat{\G}_{Kac}} \in C_r(\widehat{\G})^*$ and hence $$I_{Kac}^r\subseteq I_{\omega_{\widehat{\G}/\widehat{\G}_{Kac}}}^r := \{a\in C_r(\widehat{\G}) : \omega_{\widehat{\G}/\widehat{\G}_{Kac}}(a^*a) = 0 \}$$
    since $\omega_{\widehat{\G}/\widehat{\G}_{Kac}}$ is tracial. Using the Cauchy Schwarz inequality, we obtain that $\omega_{\widehat{\G}/\widehat{\G}_{Kac}}$ factors through some state on $C_{rKac}(\widehat{\G}_{rKac})$ and then embeds in $S(C_u(\widehat{\G}_{rKac}))$ by taking the adjoint of the canonical quotient map $C_u(\widehat{\G}_{rKac})\to C_{rKac}(\widehat{\G}_{rKac})$, i.e, using the fact $\omega_{\widehat{\G}/\widehat{\G}_{Kac}}$ factors through $\pi^u_{\widehat{\G}_{rKac}}$. From here we can argue as in Proposition \ref{Canonical Kac Quotient is Maximal} to get that $\widehat{\G}_{rKac} = \widehat{\G}_{Kac}$.
\end{proof}

\subsection{Existence and Uninqueness of Traces and Canonical Kac Quotient}
Recall that because of Theorem \ref{HaarIdempotents}, tracial idempotent states are automatically Haar. In fact, we see that $\widehat{\H}$ is a Kac type closed quantum subgroup of $\widehat{\G}$ if and only if $\omega_{\widehat{\G}/\widehat{\H}} = h_{\widehat{\H}}\circ\pi^u_{\widehat{\H}}$ is tracial. Then if $\G$ is unimodular, we find that an idempotent state is Haar if and only if it is tracial. Furthermore, if $\widehat{\G}/\widehat{\H}$ is coamenable then $\omega_{\widehat{\G}/\widehat{\H}} = h_{\widehat{\H}}\circ\pi^r_{\widehat{\H}}$, where $\omega_{\widehat{\G}/\widehat{\H}}$ and $h_{\widehat{\H}}$ are taken as states on $C_r(\widehat{\G})$ and $C_r(\widehat{\H})$ respectively and $\pi^r_{\widehat{\H}}$ is the reduced version of $\pi^u_{\widehat{\H}}$ due to Theorem \ref{quotientcoamenable} (see Remark \ref{Remark on Idempotent States}).
\begin{rem}\label{TraceImpliesIdemp}
    If we have a tracial state $\tau\in C_u(\widehat{\G})^*$, then the idempotent state obtained by taking a weak$^*$ cluster point of the Cesaro sums $\frac{1}{n}\sum^n_{k=1}\tau^{*k}$ is a tracial idempotent state (see \cite[Lemma 2.1]{NSSS19}). In particular, if a tracial state exists, then a tracial Haar idempotent exists.
\end{rem}
We remind the reader that in what follows we let $(\G_{Kac})_F := \widehat{(\widehat{\G}_{Kac})_F}$ denote the cokernel of the Furstenberg boundary of $\G_{Kac}$.
\begin{prop}\label{tracepropertiesDQGs}
    The following hold:
    \begin{enumerate}
        \item $C_r(\widehat{\G})$ has a tracial state if and only if there exists a Kac type closed quantum subgroup $\widehat{\H}$ of $\widehat{\G}$ such that $\widehat{\G}/\widehat{\H}$ is co-amenable;
        
        \item $C_r(\widehat{\G})$ has a unique tracial state if and only if $\widehat{\G}_{Kac}$ is the only Kac type closed quantum subgroup $\widehat{\H}$ of $\widehat{\G}$ such that $\widehat{\G}/\widehat{\H}$ is coamenable and $C_{rKac}(\widehat{\G}_{Kac}) = C_r(\widehat{\G}_{Kac})$;
        
        \item $C_r(\widehat{\G})$ has a unique idempotent tracial state if and only if $\widehat{\G}_{Kac}$ is the only Kac type closed quantum subgroup $\widehat{\H}$ of $\widehat{\G}$ such that $\widehat{\G}/\widehat{\H}$ is coamenable.
    \end{enumerate}
\end{prop}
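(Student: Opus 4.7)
The plan is to derive all three parts from a single correspondence between tracial idempotents in $C_r(\widehat{\G})^*$ and Kac closed quantum subgroups $\widehat{\H}\leq\widehat{\G}$ with $\widehat{\G}/\widehat{\H}$ coamenable. The forward direction combines Theorem \ref{HaarIdempotents} (tracial idempotents are automatically Haar, hence of the form $\omega_{\widehat{\G}/\widehat{\H}}$), the observation that $h_{\widehat{\H}}$ is tracial exactly when $\widehat{\H}$ is Kac, and Theorem \ref{quotientcoamenable} (lying in $C_r(\widehat{\G})^*$ is equivalent to coamenability of $\widehat{\G}/\widehat{\H}$). Conversely, any such Kac $\widehat{\H}$ produces the tracial idempotent $\omega_{\widehat{\G}/\widehat{\H}}\in C_r(\widehat{\G})^*$. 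Via Remark \ref{TraceImpliesIdemp}, every tracial state produces a tracial idempotent as a weak-$*$ cluster point of Ces\`aro averages, and these averages remain in $C_r(\widehat{\G})^*$ since the latter is a weak-$*$ closed ideal of $C_u(\widehat{\G})^*$.

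Part (1) is then immediate: a tracial state produces a tracial Haar idempotent in $C_r(\widehat{\G})^*$ which yields the Kac subgroup with coamenable quotient, while conversely any such subgroup's $\omega_{\widehat{\G}/\widehat{\H}}$ is itself a tracial state. For Part (3), the same correspondence specializes to unique tracial idempotent on the left and unique Kac subgroup with coamenable quotient on the right. Proposition \ref{Coamenable Kac Quotient} ensures that whenever tracial states exist, $\widehat{\G}_{Kac}$ is itself a Kac subgroup with coamenable quotient, so uniqueness of the idempotent trace is equivalent to $\widehat{\G}_{Kac}$ being the only such subgroup.

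For the forward direction of Part (2), a unique trace $\tau$ must equal its own Ces\`aro average and hence is a tracial idempotent; by Part (3) it must be $\omega_{\widehat{\G}/\widehat{\G}_{Kac}}$. Then $I_{Kac}^r$ collapses to the single GNS ideal $N_\tau$, which coincides with $\ker(\pi^r_{\widehat{\G}_{Kac}})$ using faithfulness of the Haar state of $\widehat{\G}_{Kac}$ on $C_r(\widehat{\G}_{Kac})$. This forces the identification $C_{rKac}(\widehat{\G}_{Kac}) = C_r(\widehat{\G})/I_{Kac}^r \cong C_r(\widehat{\G}_{Kac})$.

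The backward direction of Part (2) is where I expect the main obstacle. Condition (b) forces every trace on $C_r(\widehat{\G})$ to factor through $\pi^r_{\widehat{\G}_{Kac}}$, reducing the uniqueness problem to $C_r(\widehat{\G}_{Kac})$. Condition (a) transfers to $\widehat{\G}_{Kac}$ by composability of coamenable quotients (via the identity $\omega_{\widehat{\G}/\widehat{\K}} = \omega_{\widehat{\G}_{Kac}/\widehat{\K}}\circ\pi^r_{\widehat{\G}_{Kac}}\circ\Gamma_{\widehat{\G}}$, built from Remarks \ref{Comparison of Quotients} and \ref{Hereditary Properties of Coamenability}), yielding that $h_{\widehat{\G}_{Kac}}$ is the only tracial idempotent on $C_r(\widehat{\G}_{Kac})$. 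The delicate step is upgrading this to uniqueness of the trace itself on the Kac DQG $\widehat{\G}_{Kac}$: Ces\`aro absorption by the Haar state is automatic for all states, so the idempotent correspondence alone does not suffice. I expect to bridge the gap either by Theorem \ref{traceiffGinv}, which reformulates traces as $\G_{Kac}$-invariant states followed by a direct uniqueness argument, or by identifying condition (a) on $\widehat{\G}_{Kac}$ with faithfulness of $\G_{Kac}\acts\partial_F(\G_{Kac})$ and invoking the unimodular unique-trace result of \cite{KKSV22}.
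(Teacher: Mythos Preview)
Your approach for Parts (1) and (3), and for the forward direction of Part (2), matches the paper's proof essentially line for line: tracial idempotents in $C_r(\widehat{\G})^*$ correspond bijectively (via Theorem~\ref{HaarIdempotents} and Theorem~\ref{quotientcoamenable}) to Kac closed quantum subgroups with coamenable quotient, and Remark~\ref{TraceImpliesIdemp} plus the ideal property of $C_r(\widehat{\G})^*$ in $C_u(\widehat{\G})^*$ passes from traces to idempotent traces.

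For the backward direction of Part (2), the paper takes exactly your second option. The missing ingredient you should name explicitly is \cite[Corollary 6.7]{ASK23}: since $\widehat{\G}_{Kac}$ is Kac type, the quotient $\widehat{\G}_{Kac}/(\widehat{\G}_{Kac})_F$ is automatically coamenable. Combined with your transfer of condition (a) to $\widehat{\G}_{Kac}$ (every closed quantum subgroup of $\widehat{\G}_{Kac}$ is Kac, and coamenability of $\widehat{\G}_{Kac}/\widehat{\H}$ implies coamenability of $\widehat{\G}/\widehat{\H}$ by composing the reduced quotient maps), this forces $(\widehat{\G}_{Kac})_F = \widehat{\G}_{Kac}$, i.e.\ faithfulness of $\G_{Kac}\curvearrowright\partial_F(\G_{Kac})$, and then \cite[Theorem 5.3]{KKSV22} gives the unique trace on $C_r(\widehat{\G}_{Kac})$. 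Your first option (a ``direct uniqueness argument'' from $\G_{Kac}$-invariance) is not pursued by the paper; note that for Kac $\widehat{\G}_{Kac}$ the equivalence of traces and invariant states is already \cite[Lemma 5.2]{KKSV22}, so Theorem~\ref{traceiffGinv} adds nothing there, and no boundary-free uniqueness argument is apparent---you should commit to the Furstenberg route.
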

\begin{proof}
    1. We are using Theorem \ref{quotientcoamenable} which states that $\widehat{\G}/\widehat{\H}$ is coamenable if and only if $\omega_{\widehat{\G}/\widehat{\H}}\in C_r(\widehat{\G})^*$. As discussed in Remark \ref{TraceImpliesIdemp}, a tracial Haar idempotent exists, and hence so does a Kac type closed quantum subgroup with a coamenable quotient. Conversely, $\omega_{\widehat{\G}/\widehat{\H}}$ is a tracial state in $C_r(\widehat{\G})^*$. 
    
    2. Suppose $C_r(\widehat{\G})^*$ has a unique tracial state $\tau$. Let $\widehat{\H}$ be as in the statement. Then $h_{\widehat{\G}_{Kac}}\circ \pi^r_{\widehat{\G}_{Kac}} = \omega_{\widehat{\G}/\widehat{\G}_{Kac}} = \tau =\omega_{\widehat{\G}/\widehat{\H}} = h_{\widehat{\H}}\circ\pi^r_{\widehat{\H}}$ (see the discussion above Remark \ref{TraceImpliesIdemp}) since $\omega_{\widehat{\G}/\widehat{\G}_{Kac}}$ and $\omega_{\widehat{\G}/\widehat{\H}}$ tracial and by uniqueness of trace. Then, arguing as in Remark \ref{Comparison of Quotients}, we have that $\omega_{\widehat{\G}/\widehat{\G}_{Kac}} = \omega_{\widehat{\G} /\widehat{\H}} \implies \widehat{\H} = \widehat{\G}_{Kac}$.
    
    Conversely, let $\tau\in C_r(\widehat{\G})^*$ be a tracial state. The closed quantum subgroups $\widehat{\H}$ of $\widehat{\G}_{Kac}$ are the Kac type closed quantum subgroups of $\widehat{\G}$. Moreover, if $\widehat{\G}_{Kac}/\widehat{\H}$ were to be coamenable, then so would be $\widehat{\G}/\widehat{\H}$. Indeed, this follows by observing that the composition of the maps 
    $$C_r(\widehat{\G}) \to C_r(\widehat{\G}_{Kac}) \to C_r(\widehat{\H})$$
    gives a $*$-homomorphism as in the statement of Theorem \ref{quotientcoamenable}. Note that since $\widehat{\G}_{Kac}$ is Kac type, we can apply \cite[Corollary 6.7]{ASK23} (see also Remark \ref{Remark Update}) to deduce that $\widehat{\G}_{Kac}/(\widehat{\G}_{Kac})_F$ is coamenable. Hence, $(\widehat{\G}_{Kac})_F = \widehat{\G}_{Kac}$, i.e., the action of $\G_{Kac}$ on its Furstenburg boundary is faithful, so $C_r(\widehat{\G}_{Kac})$ has a unique tracial state by \cite[Theorem 5.3]{KKSV22}. Since $\tau$ is a tracial state, and $I^r_{Kac}\subseteq\ker(\tau)$, using a standard Cauchy-Schwarz argument, there exists a state $\tilde{\tau}\in C_{rKac}(\widehat{\G}_{Kac})^*$ such that $\tilde{\tau}\circ q = \tau$ where $q : C_r(\widehat{\G})\to C_{rKac}(\widehat{\G}_{Kac})$ is the quotient map discussed in the above remark. Clearly $\tilde{\tau}$ is tracial and since $C_{rKac}(\widehat{\G}_{Kac}) = C_r(\widehat{\G}_{Kac})$, $\tilde{\tau} = h_{\widehat{\G}_{Kac}}$ by uniqueness. Therefore, $\tau = \omega_{\widehat{\G}/\widehat{\G}_{Kac}}$.

    3. The proof follows easily from Theorems \ref{HaarIdempotents} and \ref{quotientcoamenable}: the tracial idempotent states in $C_r(\widehat{\G})^*$ are in bijection with Kac type closed quantum subgroups of $\widehat{\G}$ with a coamenable quotient. Furthermore, as shown above, if $C_r(\widehat{\G})$ admits a tracial state then $\widehat{\G}/\widehat{\G}_{Kac}$ is coamenable.
\end{proof}
\begin{rem}\label{RemarkOnKacQuot}
    Notice that we showed that $\widehat{\G}/\widehat{\G}_{Kac}$ is coamenable if there exists a tracial state in $C_r(\widehat{\G})$. In particular, we have that $C_r(\widehat{\G})$ admits a tracial state if and only if $\widehat{\G}/\widehat{\G}_{Kac}$ is coamenable.
\end{rem}
We can use Remark \ref{RemarkOnKacQuot} to partially resolve an open problem from \cite{NV17, C19} at the level of CQGs which generalizes the situation of a discrete group $G$: we have that nuclearity of $C_r(\widehat{G})$ is equivalent to amenability of $G$. Recall that a $C^*$-algebra $A$ is {\bf nuclear} if for every $C^*$-algebra $B$ we have $A\otimes_{min} B = A\otimes_{max} B$.
\begin{cor}\label{nuclear}
    Let $\G$ be a DQG. We have that $C_r(\widehat{\G})$ is nuclear and has a tracial state if and only if $\G$ is amenable.
\end{cor}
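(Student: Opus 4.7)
The proof naturally splits into two directions, with almost all the work in the backward direction; for the backward direction the strategy is to reduce to the unimodular case via the canonical Kac quotient.

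For the forward direction, suppose $\G$ is amenable. By Tomatsu's theorem (\cite{Toma}), $\widehat{\G}$ is coamenable, so the counit $\epsilon^u_{\widehat{\G}}$ factors through $\Gamma_{\widehat{\G}}$ as a state on $C_r(\widehat{\G})$; since $\Gamma_{\widehat{\G}}$ is surjective, this factorization is a character and in particular a tracial state. Nuclearity of $C_r(\widehat{\G})$ under amenability of $\G$ is a standard consequence of coamenability in the discrete quantum group setting (for instance, via a Reiter-type approximate identity argument, as recorded in \cite{NV17}).

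For the backward direction, assume $C_r(\widehat{\G})$ is nuclear and admits a tracial state. By Remark \ref{RemarkOnKacQuot} the quotient $\widehat{\G}/\widehat{\G}_{Kac}$ is coamenable, so Theorem \ref{quotientcoamenable} produces a surjective $*$-homomorphism $\pi^r_{\widehat{\G}_{Kac}} : C_r(\widehat{\G}) \to C_r(\widehat{\G}_{Kac})$. Since nuclearity is preserved under quotients by closed ideals, $C_r(\widehat{\G}_{Kac})$ is nuclear. Because $\widehat{\G}_{Kac}$ is Kac type, $\G_{Kac}$ is unimodular and $h_{\widehat{\G}_{Kac}}$ is a tracial (equivalently, $\G_{Kac}$-invariant) state on $C_r(\widehat{\G}_{Kac})$. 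The known unimodular version of the result, recalled in the introduction as a consequence of \cite{NV17} and \cite{Toma}, now applies and yields amenability of $\G_{Kac}$, i.e., coamenability of $\widehat{\G}_{Kac}$.

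It remains to deduce coamenability of $\widehat{\G}$ itself from coamenability of $\widehat{\G}_{Kac}$ together with coamenability of the quotient $\widehat{\G}/\widehat{\G}_{Kac}$. Coamenability of $\widehat{\G}_{Kac}$ provides a character $\epsilon^r_{\widehat{\G}_{Kac}} \in C_r(\widehat{\G}_{Kac})^*$ with $\epsilon^r_{\widehat{\G}_{Kac}} \circ \Gamma_{\widehat{\G}_{Kac}} = \epsilon^u_{\widehat{\G}_{Kac}}$. Composing with $\pi^r_{\widehat{\G}_{Kac}}$ yields a state on $C_r(\widehat{\G})$, and the intertwining identities $\pi^r_{\widehat{\G}_{Kac}} \circ \Gamma_{\widehat{\G}} = \Gamma_{\widehat{\G}_{Kac}} \circ \pi^u_{\widehat{\G}_{Kac}}$ together with $\epsilon^u_{\widehat{\G}_{Kac}} \circ \pi^u_{\widehat{\G}_{Kac}} = \epsilon^u_{\widehat{\G}}$ (since $\pi^u_{\widehat{\G}_{Kac}}$ is a morphism of CQGs and hence preserves counits) show that this state factors $\epsilon^u_{\widehat{\G}}$ through $\Gamma_{\widehat{\G}}$. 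Thus $\widehat{\G}$ is coamenable, so $\G$ is amenable.

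The main obstacle is the backward direction: one must pass from the existence of a single tracial state on the reduced side to global coamenability information about $\widehat{\G}$. The crucial input is Proposition \ref{tracepropertiesDQGs}, which makes the canonical Kac quotient \emph{visible at the reduced level} through the coamenability of $\widehat{\G}/\widehat{\G}_{Kac}$; once this reduction is in place, the two permanence properties used—nuclearity passing to quotients and coamenability lifting through an extension by a coamenable quotient—combine with the already-known unimodular case to close the argument.
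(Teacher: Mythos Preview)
Your proof is correct and follows essentially the same strategy as the paper's first proof of this corollary: pass to a Kac-type closed quantum subgroup with coamenable quotient, use that nuclearity descends to quotients together with the known unimodular case to get coamenability of the subgroup, and then lift coamenability back through the extension. The only cosmetic differences are that the paper works with an arbitrary Kac subgroup $\widehat{\H}$ obtained directly from Ces\`aro sums (rather than routing through $\widehat{\G}_{Kac}$ via Remark~\ref{RemarkOnKacQuot}) and cites \cite[Theorem~3.12]{KKSV22} for the extension step you carry out by hand; the paper also records a second, one-line proof that simply combines Theorem~\ref{traceiffGinv} with \cite{NV17}.
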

\begin{proof}
    Our proof is inspired by the methods used in the proof of \cite[Theorem 5.11]{FSW22}. Note that the proof of \cite[Theorem 5.11]{FSW22} gave a new proof of this fact for locally compact groups. Assume $C_r(\widehat{\G})$ is nuclear and admits a tracial state. Let $\omega_\tau\in C_r(\widehat{\G})^*$ be the idempotent state obtained from the Cesaro sums of $\tau$ as in Remark \ref{TraceImpliesIdemp}. Then, there exists a Kac type quantum subgroup $\widehat{\H}\leq \widehat{\G}$ such that $\omega_\tau = \omega_{\widehat{\G}/\widehat{\H}}$. Using Theorem \ref{quotientcoamenable}, we have that $\widehat{\G}/\widehat{\H}$ is coamenable, and hence $C_r(\widehat{\H})$ is a quotient of $C_r(\widehat{\G})$. It is well-known that quotients of nuclear $C^*$-algebras are nuclear, and so $C_r(\widehat{\H})$ is nuclear as well. Since $\widehat{\H}$ is of Kac type, it follows from \cite[Corollary 3.4]{NV17} that $\widehat{\H}$ is coamenable. So, both $\widehat{\G}/\widehat{\H}$ and $\widehat{\H}$ are coamenable, and we have from \cite[Theorem 3.12]{KKSV22} that $\widehat{\G}$ is coamenable, which entails amenability of $\G$. For the converse, it was shown in \cite{ET03} that coamenability implies nuclearity. Also, the counit $\epsilon_{\widehat{\G}}\in C_r(\widehat{\G})^*$ is a tracial state.
\end{proof}
In the forthcoming section, we will provide a different proof Corollary \ref{nuclear}.

Curiously, the question of whether or not $C_r(\widehat{\G})$ admits a unique tracial state calls into question the exoticness of $C_{rKac}(\widehat{\G}_{Kac})$ as a CQG $C^*$-algebra of $\widehat{\G}_{Kac}$. Unfortunately, it seems we do not even have an example of a non-unimodular DQG $\G$ where $C_r(\widehat{\G})$ has a unique (idempotent) tracial state (see Question \ref{Question 1}). Regardless, an example of a DQG $\G$ where $C_r(\widehat{\G})$ has a unique idempotent tracial state but not a unique tracial state would be an example where $C_{rKac}(\widehat{\G}_{Kac})$ is exotic.
\begin{ques}\label{Question 1}
    If $C_r(\widehat{\G})$ has a unique idempotent tracial state then does $C_r(\widehat{\G})$ have a unique tracial state?
\end{ques}
As curious as Question \ref{Question 1} might be, we think it is imperative that Question \ref{Question 2} is answered first.

\subsection{\texorpdfstring{$\G$}{G}-Invariant States}
An important feature of unimodular DQGs (which was used in \cite{KKSV22} to prove that faithfulness of $\G\acts \partial_F(\G)$ implies the unique trace property) is that tracial states are $\G$-invariant. This allows one to extend tracial states on $C_r(\widehat{\G})$ to $C(\partial_F(\G))\rtimes_r\G$ and is a prevalent technique in group dynamics. In this section we establish that this property holds for arbitrary DQGs. The coaction we consider is the adjoint action $\G\acts C_r(\widehat{\G})$ which is given by $\Delta^l_{\G}(a)= W_\G^*(1\otimes a)W_\G$. We will also need to consider the universal version of the adjoint action $\G\acts C_u(\widehat{\G})$, which is given by $\Delta^{u,l}_{\G}(a) = \mathbb{W}_\G^*(1\otimes a)\mathbb{W}_\G$. Here, $\mathbb{W}_\G$ is the (half-lifted) universal version of $W_\G$ and we have that $(\id\otimes \Gamma_{\widehat{\G}})\circ\Delta^{u,l}_\G(a) = \Delta^l_\G(\Gamma_{\widehat{\G}}(a))$.
\begin{defn}
    A state $\mu\in C_u(\widehat{\G})^*$ is {\bf $\G$-invariant} if $(\id\otimes\mu)\circ\Delta^{u,l}_\G = \mu(\cdot)1$.
\end{defn}
Note that $\G$-invariance of a state $\mu$ on $C_r(\widehat{\G})$ is equivalent to $\G$-invariance of $\mu$ taken as a state on $C_u(\widehat{\G})$, and the universal version of \cite[Lemma 5.2]{KKSV22} holds.
\begin{rem}
    Instead of $\G$-invariance, Crann \cite{C19} used the terminology {\bf inner invariance}. Crann said $\G$ is {\bf topologically inner amenable} if there exists an inner invariant state in $C_r(\widehat{\G})^*$.
\end{rem}
For a discrete group $G$, the $G$-invariant states on $C_u(\widehat{G})$ are exactly the tracial states on $C_u(\widehat{G})$. Generalizing this, for a unimodular DQG $\G$, a state on $C_u(\widehat{\G})$ is tracial if and only if it is $\G$-invariant \cite[Lemma 5.2]{KKSV22}. For non-unimodular $\G$, the (universal) scaling group $(\tau_t)_{t\in\R}$ is non-trivial and we have that the $\G$-invariant states are exactly the $\tau$-KMS states on $C_u(\widehat{\G})$ \cite[Lemma 5.2]{KKSV22}. More precisely, a state $\mu\in C_u(\widehat{\G})^*$ is $\G$-invariant if and only if $\mu(ab) = \mu(\tau_i(b)a)$ for every $a\in C_u(\widehat{\G})$ and $b\in \Pol(\widehat{\G})$.

Let $\mu\in C_u(\widehat{\G})^*$ be a $\G$-invariant state. Let $\omega_\mu$ be a weak$^*$ cluster point of the net of Cesaro sums $(\frac{1}{n}\sum^n_{k=1}\mu^{*k})_{n\geq 1}$. By \cite[Lemma 2.1]{NSSS19}, $\omega_\mu$ is an idempotent state.
\begin{lem}\label{CesaroSumsGInv}
    Let $\mu\in C_u(\widehat{\G})^*$ be a $\G$-invariant state. Then $\omega_\mu \in C_u(\widehat{\G})^*$ is $\G$-invariant and Haar type.
\end{lem}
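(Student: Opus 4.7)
The plan is to establish the two assertions in turn: that $\omega_\mu$ is $\G$-invariant and that it is a Haar idempotent.

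For $\G$-invariance, I would first note that the set of $\G$-invariant states is weak$^*$-closed in $C_r(\widehat{\G})^*$, since the defining condition $(f \otimes \omega) \circ \Delta^l_{\widehat{\G}}(a) = f(1)\omega(a)$ (for $f \in \ell^1(\G)$, $a \in C_r(\widehat{\G})$) is a family of weak$^*$-continuous linear constraints. The substantive step is then to verify that convolution preserves $\G$-invariance: if $\nu_1, \nu_2 \in C_r(\widehat{\G})^*$ are $\G$-invariant, then so is $\nu_1 * \nu_2$. This is a formal consequence of a compatibility relation between the adjoint coaction $\Delta^l_{\widehat{\G}}$ and the coproduct $\Delta_{\widehat{\G}}$, extractable from the pentagonal equation for $W_\G$. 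Granted this, every $\mu^{*k}$ is $\G$-invariant, each Cesaro average is a convex combination of $\G$-invariant states, and weak$^*$-closedness transfers $\G$-invariance to the cluster point $\omega_\mu$.

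For the Haar-type assertion, I would invoke \cite[Lemma 5.2]{KKSV22} to translate $\G$-invariance of $\omega_\mu$ into the KMS identity $\omega_\mu(ab) = \omega_\mu(\tau_i(b)\,a)$ for $a, b \in \Pol(\widehat{\G})$. By Theorem \ref{HaarIdempotents}, it suffices to show that the null left ideal $I_{\omega_\mu} = \{a \in C_r(\widehat{\G}) : \omega_\mu(a^*a) = 0\}$ is self-adjoint. For $a \in I_{\omega_\mu} \cap \Pol(\widehat{\G})$, the KMS identity gives $\omega_\mu(aa^*) = \omega_\mu(\tau_i(a^*)\,a)$, and the Cauchy-Schwarz inequality for the state $\omega_\mu$ yields
\[
|\omega_\mu(\tau_i(a^*)\,a)|^2 \leq \omega_\mu\!\bigl(\tau_i(a^*)\tau_i(a^*)^*\bigr)\,\omega_\mu(a^*a) = 0,
\]
so that $a^* \in I_{\omega_\mu}$. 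A density argument at the GNS level (using that $\Pol(\widehat{\G})$ is dense in $C_r(\widehat{\G})$) should then promote self-adjointness from $I_{\omega_\mu} \cap \Pol(\widehat{\G})$ to all of $I_{\omega_\mu}$, and Theorem \ref{HaarIdempotents} concludes that $\omega_\mu$ is a Haar idempotent.

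The main obstacle I anticipate is the compatibility calculation showing convolution preserves $\G$-invariance, which requires careful leg-by-leg bookkeeping with the pentagonal relation. A secondary concern is the density extension for the self-adjointness of $I_{\omega_\mu}$: since $\tau_i$ is unbounded, the Cauchy-Schwarz estimate does not transfer continuously from $\Pol(\widehat{\G})$ to $C_r(\widehat{\G})$, so one probably needs to either perform the self-adjointness check at the universal level $C_u(\widehat{\G})$ (where $\omega_\mu$ lifts to an idempotent and analytic elements for the scaling group are readily available as a norm-dense $*$-subalgebra) or argue via the polar decomposition of the Tomita operator in the GNS representation.
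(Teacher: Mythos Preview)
Your proposal is correct and the Haar-type half is essentially identical to the paper's argument: both use \cite[Lemma 5.2]{KKSV22} to pass to the KMS condition, apply Cauchy--Schwarz to see that $I_{\omega_\mu}$ is self-adjoint, and conclude via Theorem \ref{HaarIdempotents}. Your worry about the density step is legitimate, but the paper sidesteps it by citing the standard fact (proof of \cite[Proposition 7.9]{MVD98}) that the null space of a KMS state is always a two-sided ideal; you could simply quote the same result rather than attempt the extension by hand.

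The one genuine methodological difference is in the $\G$-invariance step. You propose to verify directly, from the pentagonal relation, that the adjoint coaction $\Delta^l_\G$ and the coproduct $\Delta_{\widehat{\G}}$ are compatible in the sense that $\G$-invariance is closed under convolution. The paper instead passes immediately to the KMS side via \cite[Lemma 5.2]{KKSV22} and checks closure under convolution there, using the (cleaner) identity $(\tau_z\otimes\tau_z)\circ\Delta_{\widehat{\G}} = \Delta_{\widehat{\G}}\circ\tau_z$; a short computation then shows $\mu*\mu$ is again $\tau$-KMS, and weak$^*$-closure handles the limit. Your route is more ``dynamical'' and avoids invoking the scaling group at this stage, but the leg-bookkeeping you anticipate is real; the paper's route trades that for a one-line algebraic identity at the cost of an extra appeal to \cite{KKSV22}. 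Either approach works.
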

\begin{proof}
    Since $\mu$ is $\tau$-KMS \cite[Lemma 5.2]{KKSV22}, similarly to tracial states (see Remark \ref{TraceImpliesIdemp}), we find that $\omega_\mu$ is $\tau$-KMS, and hence is a $\G$-invariant idempotent state. Indeed, it it can be shown that $(\tau_z\otimes\tau_z)\circ\Delta^u_{\widehat{\G}} = \Delta^u_{\widehat{\G}}\circ\tau_z$ for every $z\in \C$. Then, for $a\in C_u(\widehat{\G})$ and $b\in \Pol(\widehat{\G})$,
    \begin{align*}
        \mu*\mu(ab) &= (\mu\otimes\mu)\left(\Delta^u_{\widehat{\G}}(a)\Delta^u_{\widehat{\G}}(b)\right)
        \\
        &= (\mu\otimes\mu)\left([(\tau_i\otimes\tau_i)\Delta^u_{\widehat{\G}}(b)]\Delta^u_{\widehat{\G}}(a)\right)
        \\
        &= (\mu\otimes\mu)(\Delta^u_{\widehat{\G}}(\tau_i(b))\Delta^u_{\widehat{\G}}(a))
        \\
        &= \mu*\mu(\tau_i(b)a).
    \end{align*}
    So, the convolution powers of a $\tau$-KMS state is still $\tau$-KMS, and hence so would be $\omega_\mu$.
    
    A straightforward application of the Cauchy-Schwarz inequality informs us that $\omega_\mu$ is a Haar idempotent: indeed, if $a\in I_{\omega_\mu}$, then
    \begin{align*}
        |\omega_\mu(ab^*)|^2 = |\omega_\mu(\tau_i(b^*)a)|^2\leq \omega_\mu(\tau_i(b^*)\tau_i(b^*)^*))\omega_\mu(a^*a) = 0,
    \end{align*}
    for all $b\in \Pol(\widehat{\G})$ and so $a^*\in I_{\omega_\mu}$ by norm density. Then we apply Theorem \ref{HaarIdempotents}. This last computation that shows the null-space of a KMS state is an ideal is well-known (see the proof of \cite[Proposition 7.9]{MVD98}).
\end{proof}
Fix some state $\mu\in C_u(\widehat{\G})^*$ that is either tracial or $\G$-invariant. In what follows, let $\omega_{\widehat{\G}/\widehat{\H}_\mu} \in C_u(\widehat{\G})^*$ be the Haar idempotent state obtained from $\mu$ (see Remark \ref{TraceImpliesIdemp}, Lemma \ref{CesaroSumsGInv}, and Theorem \ref{HaarIdempotents}).
\begin{lem}\label{Traces and Subgroups}
    If a state $\mu\in C_u(\widehat{\G})^*$ is either a tracial or a $\G$-invariant state then there exists a state $\varphi_{\widehat{\H}_\mu}\in C_u(\widehat{\H}_\mu)^*$ such that $\mu = \varphi_{\widehat{\H}_\mu}\circ\pi_{\widehat{\H}_\mu}^u$.
\end{lem}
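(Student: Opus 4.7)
The plan is to view $\mu$ as a state on $C_u(\widehat{\G})$ via the canonical reducing morphism $\Gamma_{\widehat{\G}}$ and to show that it annihilates $\ker(\pi^u_{\widehat{\H}_\mu})$. Once that is done, the induced functional $\varphi_{\widehat{\H}_\mu}$ on $C_u(\widehat{\H}_\mu) = C_u(\widehat{\G})/\ker(\pi^u_{\widehat{\H}_\mu})$ is automatically a state, since it has norm at most one (by the quotient norm) and $\varphi_{\widehat{\H}_\mu}(1) = \mu(1) = 1$. Note that the tracial / $\G$-invariant hypothesis on $\mu$ enters only through Remark \ref{TraceImpliesIdemp} and Lemma \ref{CesaroSumsGInv}, to guarantee that $\omega_{\widehat{\G}/\widehat{\H}_\mu}$ is a genuine Haar idempotent; the remainder of the argument works for any state whose Cesaro cluster point is Haar.

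The first key input will be the Cesaro absorption identity $\mu*\omega_{\widehat{\G}/\widehat{\H}_\mu} = \omega_{\widehat{\G}/\widehat{\H}_\mu}$, coming from the bound $\|\tfrac{1}{n}\sum_{k=1}^n\mu^{*(k+1)} - \tfrac{1}{n}\sum_{k=1}^n\mu^{*k}\| \le 2/n$ and separate weak$^*$ continuity of convolution; equivalently, $\mu\circ L^u_{\omega_{\widehat{\G}/\widehat{\H}_\mu}} = \omega_{\widehat{\G}/\widehat{\H}_\mu}$. For the second input, applying $\epsilon^u_{\widehat{\G}}\otimes\id$ to the defining identity $(\id\otimes\pi^u_{\widehat{\H}_\mu})\Delta^u_{\widehat{\G}}(x) = x\otimes 1$ and using the counit property yields $\pi^u_{\widehat{\H}_\mu}(x) = \epsilon^u_{\widehat{\G}}(x)\cdot 1$ for every $x\in C_u(\widehat{\G}/\widehat{\H}_\mu)$, and therefore $\omega_{\widehat{\G}/\widehat{\H}_\mu}|_{C_u(\widehat{\G}/\widehat{\H}_\mu)} = \epsilon^u_{\widehat{\G}}|_{C_u(\widehat{\G}/\widehat{\H}_\mu)}$. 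Since $L^u_{\omega_{\widehat{\G}/\widehat{\H}_\mu}}$ is the identity on this subalgebra, combining the two observations gives $\mu|_{C_u(\widehat{\G}/\widehat{\H}_\mu)} = \epsilon^u_{\widehat{\G}}|_{C_u(\widehat{\G}/\widehat{\H}_\mu)}$.

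Setting $A = C_u(\widehat{\G}/\widehat{\H}_\mu)$, multiplicativity of $\epsilon^u_{\widehat{\G}}|_A$ combined with the previous step forces the Cauchy-Schwarz equality $\mu(x^*x) = |\epsilon^u_{\widehat{\G}}(x)|^2 = |\mu(x)|^2$ for $x\in A$. In the GNS triple $(\pi_\mu, H_\mu, \xi_\mu)$ of $\mu$ this forces $\pi_\mu(x)\xi_\mu = \epsilon^u_{\widehat{\G}}(x)\,\xi_\mu$ for every $x\in A$. Sliding the cyclic vector past $\pi_\mu(x)$ and $\pi_\mu(x^*)$ then yields the invariance relations $\mu(xa) = \mu(ax) = \epsilon^u_{\widehat{\G}}(x)\mu(a)$ for every $a\in C_u(\widehat{\G})$ and $x\in A$, so $\mu$ annihilates the norm-closed two-sided ideal $J\subseteq C_u(\widehat{\G})$ generated by the augmentation ideal $A^+ := A\cap\ker\epsilon^u_{\widehat{\G}}$.

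The main obstacle is identifying $J$ with $\ker(\pi^u_{\widehat{\H}_\mu})$. At the Hopf $*$-algebra level this is the Takeuchi-type equality $\ker(\pi_{\widehat{\H}_\mu}|_{\Pol(\widehat{\G})}) = \Pol(\widehat{\G})\cdot\Pol(\widehat{\G}/\widehat{\H}_\mu)^+$, which is a consequence of faithful flatness of $\Pol(\widehat{\G})$ over $\Pol(\widehat{\H}_\mu)$, available for closed quantum subgroups of CQGs. Norm density of $\Pol(\widehat{\G})$ in $C_u(\widehat{\G})$ together with continuity of $\mu$ then promote the vanishing $\mu|_J = 0$ to $\mu|_{\ker(\pi^u_{\widehat{\H}_\mu})} = 0$, and defining $\varphi_{\widehat{\H}_\mu}\circ\pi^u_{\widehat{\H}_\mu} := \mu$ completes the factorization.
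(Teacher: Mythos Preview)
Your argument is correct and takes a genuinely different route from the paper's proof. The paper works on the dual side: writing $T=\lambda_{\widehat{\G}}(\mu)$ and $P=\lambda_{\widehat{\G}}(\omega_{\widehat{\G}/\widehat{\H}_\mu})$, it sets $R=W_{\widehat{\G}}(1\otimes P)-1\otimes P$, computes $(\mu\otimes\id)(R^*R)=0$ from $TP=P$, and then \emph{uses the KMS/tracial hypothesis} (self-adjoint null space) to pass to $(\mu\otimes\id)(RR^*)=0$; a Cauchy--Schwarz / pentagon manipulation then yields $\Delta_\G(T)(1\otimes P)=T\otimes P$, i.e.\ $T\in\ell^\infty(\H_\mu)$, from which the factorisation through $\pi^u_{\widehat{\H}_\mu}$ is read off. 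By contrast, you stay on the $C_u(\widehat{\G})$ side: from $\mu|_{A}=\epsilon^u_{\widehat{\G}}|_{A}$ (with $A=C_u(\widehat{\G}/\widehat{\H}_\mu)$) you observe that $A$ lies in the multiplicative domain of $\mu$, so $\mu$ kills the ideal generated by $A^+$, and then invoke the Takeuchi--Chirvasitu identification $\ker\bigl(\pi_{\widehat{\H}_\mu}|_{\Pol(\widehat{\G})}\bigr)=\Pol(\widehat{\G})\cdot\Pol(\widehat{\G}/\widehat{\H}_\mu)^+$ (faithful flatness of CQG Hopf $*$-algebras over coinvariant subalgebras) to conclude.

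What each approach buys: the paper's operator-algebraic argument is self-contained within the framework already set up (group-like projections, multiplicative unitaries) and needs no external Hopf-algebra input, but it genuinely consumes the KMS/tracial hypothesis in the body of the proof. Your approach imports a nontrivial algebraic fact, but as you correctly note, once the Cesaro limit is known to be a Haar idempotent the rest of your argument uses nothing further about $\mu$ --- so it actually proves the stronger statement that \emph{any} state $\mu$ with $\mu*\omega_{\widehat{\G}/\widehat{\H}}=\omega_{\widehat{\G}/\widehat{\H}}$ for some Haar idempotent $\omega_{\widehat{\G}/\widehat{\H}}$ factors through $\pi^u_{\widehat{\H}}$. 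One small expository point: your final density step is cleanest if phrased as ``$\mu|_{\Pol(\widehat{\G})}$ factors through $\Pol(\widehat{\H}_\mu)$, the induced functional is a state on $\Pol(\widehat{\H}_\mu)$, hence extends to $C_u(\widehat{\H}_\mu)$ by universality, and the two continuous extensions to $C_u(\widehat{\G})$ agree on the dense subalgebra $\Pol(\widehat{\G})$'' --- this sidesteps any question of whether the algebraic kernel is norm-dense in $\ker(\pi^u_{\widehat{\H}_\mu})$.
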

Before completing the proof, we remind the reader of important features of closed quantum subgroups of CQGs (see \cite{T07} for the compact case and \cite{Daws} for the locally compact case). If $\widehat{\H} \leq \widehat{\G}$, then there is an injective, normal unital $*$-homomorphism $\gamma_\H : \ell^\infty(\H)\to \ell^\infty(\G)$ satisfying $\Delta_\G\circ\gamma_\H = (\gamma_\H\otimes\gamma_\H)\circ\Delta_{\H}$. The pre-adjoint is a surjective algebra homomorphism $(\gamma_\H)_* : \ell^1(\G)\to \ell^1(\H)$. It turns out that $(\gamma_\H\otimes\id)(\mathbb{W}_{\H}) = (\id\otimes\pi_{\widehat{\H}}^u)(\mathbb{W}_\G)$, where $\mathbb{W}_\G$ is the (half-lifted) universal version of $W_\G$ and we write $\mathbb{W}_{\widehat{\G}} = \Sigma\mathbb{W}_\G^*\Sigma$. Note that $\lambda_\G : \ell^1_F(\G)\to \Pol(\widehat{\G})$ is a bijective homomorphism, where $\ell^1_F(\G) = \oplus_{\pi\in \mathrm{Irr}(\widehat{\G})} (M_{n_\pi})_*$. Then, since $\Pol(\widehat{\G})$ identifies as a $*$-subalgebra of $C_u(\widehat{\G})$, $$\pi_{\widehat{\H}_\mu}(\lambda_\G(f)) = \pi_{\widehat{\H}_\mu}((f\otimes \id)(W_\G)) = (f\circ \gamma_{\H_\mu}\otimes\id)(W_\G), ~f\in\ell^1_F(\G).$$
\begin{proof}
    Since $\omega_{\widehat{\G}/\widehat{\H}_\mu}$ is obtained from Cesaro sums of $\mu$, $\mu*\omega_{\widehat{\G}/\widehat{\H}_\mu} = \omega_{\widehat{\G}/\widehat{\H}_\mu}$. Identify $\ell^\infty(\H_\mu)$ with its image $\gamma_{\H_\mu}(\ell^\infty(\H_\mu))\subseteq\ell^\infty(\G)$. We will prove that the condition $\mu*\omega_{\widehat{\G}/\widehat{\H}_\mu} = \omega_{\widehat{\G}/\widehat{\H}_\mu}$ implies the desired result. A proof was provided in a Mathoverflow post of Vaes and later appeared in the proof of \cite[Proposition 6.6]{Mc23}. We will provide a slightly modified version of the proof here. Set $T =\lambda_{\widehat{\G}}(\mu) = (\mu\otimes \id)(\mathbb{W}_{\widehat{\G}})$ and $P_{\H_\mu} = \lambda_{\widehat{\G}}(\omega_{\widehat{\G} / \widehat{\H}_\mu})$. Since $\lambda_{\widehat{\G}}$ is a homomorphism, $TP_{\H_\mu} = P_{\H_\mu}$. Write $R = \mathbb{W}_{\widehat{\G}}(1\otimes P_{\H_\mu}) - 1\otimes P_{\H_\mu}$. Then,
    $$R^*R = 2(1\otimes P_{\H_\mu}) - (1\otimes P_{\H_\mu})\mathbb{W}_{\widehat{\G}}(1\otimes P_{\H_\mu}) -(1\otimes P_{\H_\mu})\mathbb{W}_{\widehat{\G}}^*(1\otimes P_{\H_\mu})$$
    and hence $(\mu\otimes \id)(R^*R) = 0$. Since the null-space of a KMS state (and in particular, a tracial state) is an ideal (in particular, self-adjoint (see the proof of Lemma \ref{CesaroSumsGInv})), $(\mu\otimes \tr_\pi)(RR^*) = 0$ for all $\pi\in \mathrm{Irr}(\widehat{\G})$, where $\tr_\pi \in (M_{n_\pi})_*\subseteq \ell^1(\G)$ is the normalized trace. Hence $(\mu\otimes\id)(RR^*) = 0$ because the $\tr_\pi$'s are faithful. By the Cauchy-Schwarz inequality,
    $$(\mu\otimes\id\otimes\id)(R_{13}(\mathbb{W}_{\widehat{\G}})_{12}) = 0.$$
    Recall that $(\Delta_{\G}\otimes\id)(\mathbb{W}_\G) = (\mathbb{W}_\G)_{13}(\mathbb{W}_\G)_{23}$ which implies $(\id\otimes\Delta_\G)(\mathbb{W}_{\widehat{\G}}) = (\mathbb{W}_{\widehat{\G}})_{13}(\mathbb{W}_{\widehat{\G}})_{12}$. Consequently,
    $$\Delta_\G(T)(1\otimes P_{\H_\mu}) = (\mu\otimes\id\otimes\id)((W_{\widehat{\G}})_{13}(W_{\widehat{\G}})_{12})(1\otimes P_{\H_\mu}) = T\otimes P_{\H_\mu}.$$
    As discussed in Section $2.2$, $T\in \ell^\infty(\H_\mu)$. Furthermore, given $f\in \ell^1_F(\G)$, $\pi_{\widehat{\H}_\mu}(\lambda_\G(f)) = 0$ if and only if $f\circ\gamma_{\H_\mu} = 0$. Hence $\varphi :\Pol(\widehat{\H}_\mu)\to\C$ such that $\varphi(\pi_{\widehat{\H}_\mu}(\lambda_\G(f))) = \mu(\lambda_\G(f))$, $f\in\ell^1_F(\G)$, is a well-defined state, i.e., a unital a functional such that $\varphi(bb^*) \geq 0$ for all $b\in\Pol(\widehat{\G})$. By universality (\cite[Theorem 3.3]{BML01}), $\varphi$ extends continuously to a state $\varphi_{\widehat{\H}_\mu}\in C_u(\widehat{\H}_\mu)^*$ which satisfies $\varphi_{\widehat{\H}_\mu}\circ\pi^u_{\widehat{\H}_\mu} = \mu$ as desired.
\end{proof}
\begin{thm}\label{traceiffGinv}
    A state $\mu\in C_u(\widehat{\G})^*$ is tracial if and only if it is $\G$-invariant.
\end{thm}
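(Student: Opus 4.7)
The plan is to use Lemma \ref{Traces and Subgroups} to reduce to the unimodular case, where the equivalence is already established by \cite[Lemma 5.2]{KKSV22}. Let $\mu\in C_r(\widehat{\G})^*$ be either tracial or $\G$-invariant. Viewing $\mu$ as a state on $C_u(\widehat{\G})$, Lemma \ref{Traces and Subgroups} gives a factorization $\mu = \varphi_{\widehat{\H}_\mu}\circ \pi^u_{\widehat{\H}_\mu}$, where $\widehat{\H}_\mu$ is the closed quantum subgroup associated with the Haar idempotent $\omega_\mu = h_{\widehat{\H}_\mu}\circ\pi^u_{\widehat{\H}_\mu}$ arising as a weak$^*$ cluster point of the Cesaro sums of $\mu$. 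The argument then has two parts: (i) showing $\widehat{\H}_\mu$ is Kac type, and (ii) showing that $\mu$ is tracial (resp.\ $\G$-invariant) if and only if $\varphi_{\widehat{\H}_\mu}$ is tracial (resp.\ $\H_\mu$-invariant). With both in hand, \cite[Lemma 5.2]{KKSV22} applied to the unimodular DQG $\H_\mu$ closes the loop.

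For (i), in the tracial case $\omega_\mu$ is a tracial idempotent (Remark \ref{TraceImpliesIdemp}), hence a tracial Haar idempotent by Theorem \ref{HaarIdempotents}, and the observation recorded just before Remark \ref{TraceImpliesIdemp} then gives that $\widehat{\H}_\mu$ is Kac. In the $\G$-invariant case, Lemma \ref{CesaroSumsGInv} yields that $\omega_\mu$ is a $\G$-invariant Haar idempotent. I would then use the intertwining $(\gamma_{\H_\mu}\otimes\id)(\mathbb{W}_{\H_\mu}) = (\id\otimes\pi^u_{\widehat{\H}_\mu})(\mathbb{W}_\G)$ together with the injectivity and unitality of $\gamma_{\H_\mu}$ to descend the $\G$-invariance of $\omega_\mu$ to $\H_\mu$-invariance of $h_{\widehat{\H}_\mu}$. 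The universal version of \cite[Lemma 5.2]{KKSV22} applied to $\H_\mu$ then forces $h_{\widehat{\H}_\mu}$ to be $\tau^{\widehat{\H}_\mu}$-KMS. Since $h_{\widehat{\H}_\mu}$ is also KMS with respect to its own modular automorphism group, uniqueness of the modular group of a KMS state identifies $\tau^{\widehat{\H}_\mu}$ with the modular automorphism group of $h_{\widehat{\H}_\mu}$, which is precisely the Kac condition.

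For (ii), traciality transfers across the surjective $*$-homomorphism $\pi^u_{\widehat{\H}_\mu}$ automatically. For $\G$-invariance, the same intertwining yields the identity
\begin{equation*}
(\id\otimes \mu)\circ \Delta^l_\G \;=\; \gamma_{\H_\mu}\circ(\id\otimes \varphi_{\widehat{\H}_\mu})\circ \Delta^l_{\H_\mu}\circ \pi^u_{\widehat{\H}_\mu},
\end{equation*}
and injectivity of $\gamma_{\H_\mu}$ together with surjectivity of $\pi^u_{\widehat{\H}_\mu}$ then gives that $\mu$ is $\G$-invariant iff $\varphi_{\widehat{\H}_\mu}$ is $\H_\mu$-invariant. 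I expect the main obstacle to be part (i) in the $\G$-invariant case, which requires extracting the Kac property from the coincidence, forced by uniqueness of the modular group, between the scaling group and the modular automorphism group of the Haar state on $C_u(\widehat{\H}_\mu)$; once that is secured, the rest of the argument is a matter of carefully tracking the coaction compatibilities through the factorization.
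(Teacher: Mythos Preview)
Your proposal is correct and follows essentially the same route as the paper: factor $\mu$ through $\pi^u_{\widehat{\H}_\mu}$ via Lemma \ref{Traces and Subgroups}, use the intertwining $(\gamma_{\H_\mu}\otimes\id)(\mathbb{W}_{\H_\mu}) = (\id\otimes\pi^u_{\widehat{\H}_\mu})(\mathbb{W}_\G)$ to pass invariance back and forth between $\G$ and $\H_\mu$, and then appeal to \cite[Lemma 5.2]{KKSV22} on the Kac quantum group $\H_\mu$. The only cosmetic difference is that where you spell out the uniqueness-of-the-modular-group argument to extract the Kac property of $\widehat{\H}_\mu$ from $\H_\mu$-invariance of $h_{\widehat{\H}_\mu}$, the paper simply cites \cite[Lemma 5.2]{KKSV22} for that step.
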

\begin{proof}
    Throughout the proof, we maintain the notation established in the paragraph above Lemma \ref{Traces and Subgroups}.
    
    We will begin with a preliminary observation. First, assume that $\mu = \varphi_{\widehat{\H}}\circ\pi_{\widehat{\H}}^u$ for some state $\varphi_{\widehat{\H}}\in C_u(\widehat{\H}_\mu)^*$ and closed quantum subgroup $\widehat{\H}\leq\widehat{\G}$ (we are making no additional assumptions on $\mu$ at the moment). We will prove that $\mu$ is $\G$-invariant if and only if $\varphi_{\widehat{\H}}$ is $\H$-invariant. Accordingly, given $a\in C_u(\widehat{\G})$,
    \begin{align*}
        (\gamma_\H\otimes \varphi_{\widehat{\H}})(\mathbb{W}_{\H}^*(1\otimes \pi_{\widehat{\H} }^u(a))\mathbb{W}_{\H }) &= (\id \otimes \varphi_{\widehat{\H} })((\gamma_{\H }\otimes\id)(\mathbb{W}^*_{\H })(1\otimes \pi_{\widehat{\H} }^u(a))(\gamma_{\H }\otimes\id)(\mathbb{W}_{\H }))
        \\
        &= (\id \otimes \varphi_{\widehat{\H} })((\id\otimes\pi_{\widehat{\H} }^u)(\mathbb{W}^*_\G)(1\otimes \pi_{\widehat{\H} }^u(a))(\id\otimes\pi_{\widehat{\H} }^u)(\mathbb{W}_\G))
        \\
        &= (\id \otimes \varphi_{\widehat{\H} }\circ \pi_{\widehat{\H} }^u)(\mathbb{W}^*_\G(1\otimes a)\mathbb{W}_\G).
    \end{align*}
    This shows,
    \begin{align}\label{EqGHInv}
        (\gamma_{\H }\otimes\varphi_{\widehat{\H}})\circ\Delta_{\H }^{u,l} = (\id\otimes \varphi_{\widehat{\H} }\circ\pi^u_{\widehat{\H} })\circ\Delta^{u,l}_\G.
    \end{align}
    Since $\gamma_{\H }$ is injective, we deduce that $\mu$ is $\G$-invariant if and only if $\varphi_{\widehat{\H} }$ is $\H $-invariant.
    
    Suppose $\mu$ is tracial. Then the corresponding Haar idempotent $\omega_{\widehat{\G}/\widehat{\H}_\mu} = h_{\widehat{\H}_\mu}\circ\pi_{\widehat{\H}_\mu}^u$ is tracial, hence $\widehat{\H}_\mu$ is Kac type. By Lemma \ref{Traces and Subgroups}, $\mu = \varphi_{\widehat{\H}_\mu}\circ \pi^u_{\widehat{\H}_\mu}$ for some state $\varphi_{\widehat{\H}_\mu}\in C_u(\widehat{\H}_\mu)^*$. Clearly, $\varphi_{\widehat{\H}_\mu}$ is tracial hence is $\H_\mu$-invariant by \cite[Lemma 5.2]{KKSV22}. By \eqref{EqGHInv}, $\mu$ is $\G$-invariant. Conversely, suppose $\mu$ is $\G$-invariant. By Lemma \ref{CesaroSumsGInv}, the corresponding Haar idempotnet $\omega_{\widehat{\G}/\widehat{\H}_\mu} = h_{\widehat{\H}_\mu}\circ\pi^u_{\widehat{\H}_\mu}$ is $\G$-invariant. By Lemma \ref{Traces and Subgroups}, $\mu = \varphi_{\widehat{\H}_\mu}\circ \pi^u_{\widehat{\H}_\mu}$ for some state $\varphi_{\widehat{\H}_\mu}\in C_u(\widehat{\H}_\mu)^*$. By \eqref{EqGHInv}, $h_{\widehat{\H}_\mu}$ is $\H_\mu$-invariant, hence $\widehat{\H}_\mu$ is Kac type by \cite[Lemma 5.2]{KKSV22}. Using \eqref{EqGHInv} again, we deduce that $\varphi_{\widehat{\H}_\mu}$ is $\H_\mu$-invariant and hence is tracial by \cite[Lemma 5.2]{KKSV22}. Then, clearly, $\mu$ is tracial too.
\end{proof}
Theorem \ref{traceiffGinv} immediately gives us another proof of Corollary \ref{nuclear}.\\
~\\
{\bf Corollary \ref{nuclear}} {\it We have that $C_r(\widehat{\G})$ is nuclear and has a tracial state if and only if $\G$ is amenable.}
\begin{proof}
    It is was proven in \cite{NV17} (combined with \cite{Toma}) that $C_r(\widehat{\G})$ is nuclear and has a $\G$-invariant state if and only if $\G$ is amenable. The proof is complete with Theorem \ref{traceiffGinv}.
\end{proof}
\begin{rem}
    It is worth noting that there seems to be no example of a DQG $\G$ where $C_r(\widehat{\G})$ is nuclear but $\G$ is non-amenable. Nevertheless, we think our development is interesting because it gives a purely operator algebraic characterization of amenability of $\G$ (in terms of nuclearity). To elaborate, apriori, $\G$-invariance is a quantum group dynamical property (which depends explicitly on the quantum group structure) whereas traciality is an operator algebraic property (which depends only on the ambient operator algebra structure).
\end{rem}

\subsection{Existence and Uniqueness of Traces}
\begin{defn}
    Let $\G$ be a DQG and $\widehat{\H}$ a closed quantum subgroup of $\widehat{\G}$. We say a state $\mu\in C_u(\widehat{\G})^*$ {\bf concentrates} on $\widehat{\G}/\widehat{\H}$ if $\omega_{\widehat{\G}/\widehat{\H}}*\mu = \mu$. 
\end{defn}
\begin{rem}
    The above notion is a noncommutative version of the notion which appears, for example, in the statement of \cite[Theorem 4.1]{BKKO17}. The result of \cite[Theorem 4.1]{BKKO17} is that every tracial state of $C_r(\widehat{G})$, where $G$ is a discrete group, concentrates on the amenable radical of $G$, i.e., $\tau(\lambda(s)) = 0$ for every $s\in G\setminus R_a(G)$. Indeed, using the (contractive) algebraic inclusion $C_r(\widehat{G})^*\subseteq \ell^\infty(G)$,
    \begin{align*}
        \tau(s) = 0 ~ \text{for every $s\notin R_a(G)$} ~\iff \tau(s) 1_{R_a(G)}(s) = \tau(s) ~ \text{for every $s\in G$},
    \end{align*}
    where we recall that in this case $1_{R_a(G)} = \omega_{\widehat{\G}/\widehat{\G}_F}$.
\end{rem}
Recall the definition of $\G_F$ from Section $2.5$, which is the cokernel of the Furstenberg boundary of $\G$. The following was essentially shown in the proof of \cite[Theorem 5.3]{KKSV22}. We complete the proof here. Recall also the discussion in Section $2.5$.
\begin{thm}\label{concentratesonrad}
    Every tracial state $\mu\in C_r(\widehat{\G})^*$ concentrates on $\widehat{\G}/\widehat{\G}_F$.
\end{thm}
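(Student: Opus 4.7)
The plan is to invoke Theorem \ref{traceiffGinv} to reduce the problem from traciality to $\G$-invariance, and then to replay the $\G$-invariance portion of the argument sketched inside the proof of \cite[Theorem 5.3]{KKSV22}. Concretely, since $\mu \in C_r(\widehat{\G})^*$ is tracial, Theorem \ref{traceiffGinv} tells us $\mu$ is $\G$-invariant with respect to the adjoint coaction $\Delta^l_\G$. It therefore suffices to show that every $\G$-invariant state on $C_r(\widehat{\G})$ satisfies $\omega_{\widehat{\G}\backslash \widehat{\G}_F} * \mu = \mu$.

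For the $\G$-invariant case I would pass to the Furstenberg boundary. Since $C_r(\widehat{\G})$ embeds $\G$-equivariantly into the reduced crossed product $C(\partial_F(\G))\rtimes_r \G$, and since $C(\partial_F(\G))$ is $\G$-injective, a standard $\G$-equivariant averaging/conditional expectation argument provides a $\G$-equivariant ucp map from $C(\partial_F(\G))\rtimes_r \G$ back onto $C_r(\widehat{\G})$ through which $\mu$ extends to a $\G$-invariant state $\tilde{\mu}$ on the crossed product. Restricting $\tilde{\mu}$ instead to the canonical copy of $C(\partial_F(\G))$ inside $C(\partial_F(\G))\rtimes_r \G$ produces a state $\nu \in C(\partial_F(\G))^*$, and the associated Poisson transform $\fP_\nu : C(\partial_F(\G)) \to \ell^\infty(\G)$ lands inside $\ell^\infty(\G_F) = N_F$ by the very definition of the cokernel.

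The final step is to translate this containment back into the desired convolution identity. Tracking $\fP_\nu$ through the $\G$-equivariance of the crossed product embedding, together with the $\G$-invariance (equivalently, the $\tau$-KMS property via \cite[Lemma 5.2]{KKSV22}) of $\mu$, one sees that $\mu$ must factor through the conditional expectation $(\omega_{\widehat{\G}\backslash \widehat{\G}_F}\otimes \id)\circ\Delta_{\widehat{\G}}$ associated with the Haar idempotent $\omega_{\widehat{\G}\backslash \widehat{\G}_F}$; this rearranges precisely to $\omega_{\widehat{\G}\backslash \widehat{\G}_F} * \mu = \mu$, which is the assertion that $\mu$ concentrates on $\widehat{\G}/\widehat{\G}_F$. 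The main technical obstacle is this last translation, where one must ensure that the extension to the crossed product and the subsequent restriction to $C(\partial_F(\G))$ interact correctly with convolution on $C_r(\widehat{\G})^*$; however, essentially all of this bookkeeping is already carried out in the proof of \cite[Theorem 5.3]{KKSV22}, so the genuinely new ingredient here is only the reduction from tracial to $\G$-invariant states supplied by Theorem \ref{traceiffGinv}.
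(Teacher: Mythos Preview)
Your overall strategy---reduce traciality to $\G$-invariance via Theorem \ref{traceiffGinv}, then invoke the argument inside the proof of \cite[Theorem 5.3]{KKSV22}---is exactly what the paper does, and you correctly identify that Theorem \ref{traceiffGinv} is the only new ingredient. However, your sketch of the \cite{KKSV22} portion contains two inaccuracies worth flagging.

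First, the $\G$-injective object is $C(\partial_F(\G))$, not $C_r(\widehat{\G})$; the extension $M_\mu$ of $\mu$ is a $\G$-equivariant ucp map $C(\partial_F(\G))\rtimes_r\G \to C(\partial_F(\G))$, not ``back onto $C_r(\widehat{\G})$''. There is no reason in general for a $\G$-equivariant conditional expectation from the crossed product onto $C_r(\widehat{\G})$ to exist. Second, your observation that $\fP_\nu$ lands in $\ell^\infty(\G_F)$ is true for \emph{every} state $\nu$ on $C(\partial_F(\G))$ by the very definition of the cokernel, so on its own it carries no information about $\mu$ and cannot be the source of the concentration. The actual mechanism in the paper is different: $\G$-rigidity forces $M_\mu|_{\alpha(C(\partial_F(\G)))} = \alpha^{-1}$, so $\alpha(C(\partial_F(\G)))$ lies in the multiplicative domain of $M_\mu$; combining this with the crossed-product coaction identity yields $y\,\lambda_{\widehat{\G}}(\mu) = \epsilon_\G(y)\,\lambda_{\widehat{\G}}(\mu)$ for all $y\in\ell^\infty(\G_F)$, and taking $y = P_{\G_F}$ gives $\omega_{\widehat{\G}/\widehat{\G}_F}*\mu = \mu$. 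Since you ultimately defer to \cite{KKSV22} for the details anyway, your proposal is not wrong as a plan, but the narrative you give of that argument would mislead a reader attempting to reconstruct it.
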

\begin{proof}
    Since $\mu$ is tracial, it is $\G$-invariant by Theorem \ref{traceiffGinv}. Using $\G$-injectivity of $C(\partial_F(\G))$, we obtain a ucp $\G$-equivariant extension $M_\mu : C(\partial_F(\G))\rtimes_r \G\to C(\partial_F(\G))$ of $\mu$, where $\mu : C_r(\widehat{\G})\to C(\partial_F(\G))$ is interpretted as the $\G$-equivariant ucp map $a\mapsto \mu(a)1$. It was shown in the proof of \cite[Theorem 5.3]{KKSV22} that for all $y\in \ell^\infty(\G_F)$, $\lambda_{\widehat{\G}}(\mu)y = \epsilon_\G(y)\lambda_{\widehat{\G}}(\mu)$. We remind the reader how this is done here.
    
    By $\G$-rigidity, the restriction of $M_\mu$ to $\alpha(C(\partial_F(\G))$ is equal to $\alpha^{-1}$, so we conclude that $\alpha(C(\partial_F(\G))$ lies in the multiplicative domain of $M_\mu$. In particular,
    $$M_\mu(\alpha(x)(a\otimes 1)) = \mu( a)x = M_\mu(( a\otimes 1)\alpha(x))$$
    for all $x\in C(\partial_F(\G))$ and $ a\in C_r(\widehat{\G})$. Let $\beta$ be the coaction of $\G$ on $C(\partial_F(\G))\rtimes_r\G$ (see Section $2.5$). By definition,
    $$(W_\G^*\otimes 1)(1\otimes \alpha(x)) = \beta(\alpha(x))(W_\G^*\otimes 1), ~ x\in C(\partial_F(\G)).$$
    By applying $\id\otimes M_\mu$ to both sides of the above equation and using $\beta|_{\alpha(C(\partial_F(\G)))} = \id\otimes\alpha$, we obtain,
    $$(\id\otimes \mu)(W_\G^*)\otimes x = \alpha(x)((\id\otimes\mu)(W_\G^*)\otimes 1).$$
    Therefore $\varphi(x)(\id\otimes\mu)(W_\G^*) = \mathcal{P}_\varphi(x)(\id\otimes\mu)(W_\G^*)$ for every $\varphi\in C(\partial_F(\G))^*$ and $x\in C(\partial_F(\G))$. Then, since $\varphi(x) = \epsilon_\G\circ\mathcal{P}_\varphi(x)$ and $W_{\widehat{\G}} = \Sigma(W_\G^*)\Sigma$, we deduce that
    $$\epsilon_\G(y)\lambda_{\widehat{\G}}(\mu) =  \lambda_{\widehat{\G}}(\mu)y, ~ \text{for all} ~ y\in \ell^\infty(\G_F).$$
    In particular, for $P_{\G_F} = \lambda^u_{\widehat{\G}}(\omega_{\widehat{\G}/\widehat{\G}_F})\in \ell^\infty(\G_F)$, $$\lambda^u_{\widehat{\G}}(\omega_{\widehat{\G}/\widehat{\G}_F}*\mu) = P_{\G_F}\lambda_{\widehat{\G}}(\mu) = \lambda_{\widehat{\G}}(\mu).$$
    So, $\omega_{\widehat{\G}/\widehat{\G}_F}*\mu = \mu$.
\end{proof}
\begin{rem}\label{concentratedonradicalremark}
    Let $\H$ be a Kac type closed quantum subgroup of $\widehat{\G}$ such that $\widehat{\G}/\widehat{\H}$ is coamenable. From Theorem \ref{concentratesonrad}, we have that the associated (tracial) Haar idempotent $\omega_{\widehat{\G}/\widehat{\H}}\in C_r(\widehat{\G})^*$ satisfies $\omega_{\widehat{\G}/\widehat{\G}_F}*\omega_{\widehat{\G}/\widehat{\H}} = \omega_{\widehat{\G}/\widehat{\H}}$. By the calculations in Remark \ref{Comparison of Quotients}, $L^\infty(\widehat{\G}/\widehat{\H})\subseteq L^\infty(\widehat{\G}/\widehat{\G}_F)$ and so $\widehat{\G}_F$ is a closed quantum subgroup of $\widehat{\H}$ by Remark \ref{Comparison of Quotients}.
\end{rem}
From here, we obtain our highlighted results on the existence and uniqueness of traces.
\begin{cor}\label{existenceanduniqueoftraces}
    Suppose that $\widehat{\G}/\widehat{\G}_F$ is coamenable. The following hold:
    \begin{enumerate}
        \item $\widehat{\G}_F$ is Kac type if and only if $C_r(\widehat{\G})$ has a tracial state;
        
        \item  $C_r(\widehat{\G})$ has a tracial state, $\widehat{\G}_F = \widehat{\G}_{Kac}$, and the canonical quotient map $C_{rKac}(\widehat{\G}_{Kac}) \to C_r(\widehat{\G}_{Kac})$ is injective if and only if $C_r(\widehat{\G})$ has a unique tracial state;
        
        \item $C_r(\widehat{\G})$ has a tracial state and $\widehat{\G}_F = \widehat{\G}_{Kac}$ if and only if $C_r(\widehat{\G})$ has a unique idempotent tracial state.
    \end{enumerate}
\end{cor}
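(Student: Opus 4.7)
The plan is to derive all three equivalences by combining Proposition \ref{tracepropertiesDQGs} with the structural fact, extracted from Theorem \ref{concentratesonrad} via Remark \ref{concentratedonradicalremark}, that every Kac type closed quantum subgroup $\widehat{\H}\leq\widehat{\G}$ admitting a coamenable quotient $\widehat{\G}/\widehat{\H}$ satisfies $\widehat{\G}_F\leq\widehat{\H}$. This pins down the role of $\widehat{\G}_F$ as a lower bound for the lattice of Kac type quotients relevant to tracial states.

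For part (1), I will argue as follows. If $\widehat{\G}_F$ is Kac type then, since $\widehat{\G}/\widehat{\G}_F$ is coamenable by hypothesis, Proposition \ref{tracepropertiesDQGs}(1) immediately produces a tracial state. Conversely, if $C_r(\widehat{\G})$ admits a tracial state, Proposition \ref{tracepropertiesDQGs}(1) yields some Kac type closed quantum subgroup $\widehat{\H}\leq\widehat{\G}$ with $\widehat{\G}/\widehat{\H}$ coamenable; Remark \ref{concentratedonradicalremark} then gives $\widehat{\G}_F\leq\widehat{\H}$, and a closed quantum subgroup of a Kac type CQG is Kac type, so $\widehat{\G}_F$ is Kac type.

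For part (3), assume first that $C_r(\widehat{\G})$ has a tracial state and $\widehat{\G}_F=\widehat{\G}_{Kac}$. Coamenability of $\widehat{\G}/\widehat{\G}_{Kac}$ follows from the hypothesis, and any other Kac type closed quantum subgroup $\widehat{\H}$ with coamenable quotient must contain $\widehat{\G}_F=\widehat{\G}_{Kac}$ by Remark \ref{concentratedonradicalremark}, and also be contained in $\widehat{\G}_{Kac}$ by maximality of the canonical Kac quotient (Proposition \ref{Canonical Kac Quotient is Maximal}); thus $\widehat{\H}=\widehat{\G}_{Kac}$, and Proposition \ref{tracepropertiesDQGs}(3) gives uniqueness of the idempotent tracial state. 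Conversely, uniqueness gives via Proposition \ref{tracepropertiesDQGs}(3) that $\widehat{\G}_{Kac}$ is the unique Kac type closed quantum subgroup with coamenable quotient; existence of a tracial state together with part (1) forces $\widehat{\G}_F$ to be Kac type with coamenable quotient $\widehat{\G}/\widehat{\G}_F$, so $\widehat{\G}_F=\widehat{\G}_{Kac}$ by uniqueness.

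Part (2) is proved by the same scheme but invoking Proposition \ref{tracepropertiesDQGs}(2) and adding the identification $C_{rKac}(\widehat{\G}_{Kac})=C_r(\widehat{\G}_{Kac})$ on both sides of the equivalence; the extra ingredient is just the equality $C_r(\widehat{\G}_F)=C_r(\widehat{\G}_{Kac})$ that follows once $\widehat{\G}_F=\widehat{\G}_{Kac}$. The main conceptual obstacle in the whole proof is already absorbed into Theorem \ref{concentratesonrad} and Remark \ref{concentratedonradicalremark}, which rest on Theorem \ref{traceiffGinv} (tracial equals $\G$-invariant) and on the $\G$-injectivity/rigidity of $C(\partial_F\G)$; once those are in hand, the coamenability hypotheses on $\widehat{\G}/\widehat{\G}_F$ and $\widehat{\G}_{Kac}/(\widehat{\G}_{Kac})_F$ do all the remaining work, namely keeping $\omega_{\widehat{\G}/\widehat{\G}_F}$ and $\omega_{\widehat{\G}_{Kac}/(\widehat{\G}_{Kac})_F}$ inside the reduced duals so that Proposition \ref{tracepropertiesDQGs} applies verbatim. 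The minor bookkeeping issue to watch is distinguishing closed quantum subgroups of $\widehat{\G}_{Kac}$ from those of $\widehat{\G}$, which is handled exactly as in the proof of Proposition \ref{tracepropertiesDQGs}(2).
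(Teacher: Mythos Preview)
Your proposal is correct and follows essentially the same route as the paper: both argue via Proposition~\ref{tracepropertiesDQGs} together with Remark~\ref{concentratedonradicalremark} (built on Theorem~\ref{concentratesonrad}) to sandwich any Kac type $\widehat{\H}$ with coamenable quotient between $\widehat{\G}_F$ and $\widehat{\G}_{Kac}$, and then read off the three equivalences. The only cosmetic difference is that the paper phrases the sandwich via the inclusions $L^\infty(\widehat{\G}/\widehat{\G}_{Kac})\subseteq L^\infty(\widehat{\G}/\widehat{\H})\subseteq L^\infty(\widehat{\G}/\widehat{\G}_F)$ from Remark~\ref{Comparison of Quotients}, whereas you phrase it directly as $\widehat{\G}_F\leq\widehat{\H}\leq\widehat{\G}_{Kac}$.
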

\begin{proof}
    1. If $C_r(\widehat{\G})$ has a tracial state, then Proposition \ref{tracepropertiesDQGs} implies that a Kac type closed quantum subgroup $\widehat{\H}$ such that $\widehat{\G}/\widehat{\H}$ is coamenable exists. Then $\widehat{\G}_F$ is Kac type because it is a closed quantum subgroup of $\widehat{\H}$. Conversely, if $C_r(\widehat{\G})$ has no tracial states, then $\widehat{\G}_F$ could not be Kac type because then $\omega_{\widehat{\G}/\widehat{\G}_F}\in C_r(\widehat{\G})^*$ would be tracial.
    
    2. Thanks to Proposition \ref{Coamenable Kac Quotient}, $\widehat{\G}/\widehat{\G}_{Kac}$ is coamenable. Then, as discussed in Remark \ref{concentratedonradicalremark}, $\widehat{\G}_F$ is a closed quantum subgroup of $\widehat{\G}_{Kac}$. If $C_r(\widehat{\G})$ has a unique tracial state, then $h_{\widehat{\G}_F}\circ\pi^r_{\widehat{\G}_F} = \omega_{\widehat{\G}/\widehat{\G}_F} = \omega_{\widehat{\G}/\widehat{\G}_{Kac}} = h_{\widehat{\G}_{Kac}}\circ\pi^r_{\widehat{\G}_{Kac}}$ (see the discussion above Remark \ref{TraceImpliesIdemp}), so $\widehat{\G}_F = \widehat{\G}_{Kac}$. Moreover, since $\omega_{\widehat{\G}/\widehat{\G}_F}\in C_r(\widehat{\G})^*$ is the unique tracial state,
    $$I^r_{Kac} = \{a\in C_r(\widehat{\G}) : \omega_{\widehat{\G}/\widehat{\G}_F}(a^*a) = 0\} =: I^r_{\omega_{\widehat{\G}/\widehat{\G}_F}}$$
    and $C_r(\widehat{\G})/I^r_{\omega_{\widehat{\G}/\widehat{\G}_F}} = C_r(\widehat{\G}_F)$ (see the discussion following Theorem \ref{HaarIdempotents}).
    
    Conversely, for every Kac type closed quantum subgroup $\widehat{\H}$ of $\widehat{\G}$ where $\widehat{\G}/\widehat{\H}$ is coamenable, $L^\infty(\widehat{\G}/\widehat{\G}_{Kac})\subseteq L^\infty(\widehat{\G}/\widehat{\H})\subseteq L^\infty(\widehat{\G}/\widehat{\G}_F)$ (Remark \ref{Comparison of Quotients}). So, $\widehat{\H} = \widehat{\G}_F = \widehat{\G}_{Kac}$. Then Theorem \ref{tracepropertiesDQGs} implies $C_r(\widehat{\G})$ has a unique tracial state.
    
    3. Similar to the proof of 2..
\end{proof}
\begin{rem}\label{Remark Update}
    It follows from \cite[Corollary 6.7]{ASK23} that $\widehat{\G}/\widehat{\G}_F$ is coamenable in the following situations: whenever $\widehat{\G}$ is Kac type and whenever $C_r(\widehat{\G})$ is exact (as a $C^*$-algebra). In particular, with reference to Corollary \ref{existenceanduniqueoftraces}, the assumption of coamenability of $\widehat{\G}/\widehat{\G}_F$ is automatic in the case where $C_r(\widehat{\G})$ is exact.
\end{rem}
Our last result calls into question how often $C_r(\widehat{\G})$ admits a unique (idempotent) tracial state. At the moment, it seems we have no examples at all. In principle, it might be the case that if $C_r(\widehat{\G})$ has a unique (idempotent) trace then $\G$ is unimodular, but such a result seems rather strong. Nevertheless, we cannot yet rule out the truth of such a statement.
\begin{ques}\label{Question 2}
    Is there an example of a non-unimodular DQG $\G$ for which $C_r(\widehat{\G})$ has a unique (idempotent) tracial state?
\end{ques}

{\flushleft\bf Acknowledgements}: We thank Fatemeh Khosravi for helpful conversations on nuclear $C^*$-algebras. We also thank Adam Skalski for some clarification regarding the canonical Kac quotient of a compact quantum group. We thank the referee for their suggestions. They greatly improved the presentation of this paper. The bulk of this article was completed as part of the author's doctoral work. For part of this work, the author was supported by the ANR project ANR-19-CE40-0002.

{\small\bibliography{TracesAndCoamenableCoideals}}
\bibliographystyle{abbrv}
\renewcommand*{\bibname}{References}

\end{document}